\long\def\@makefntext#1{\noindent #1}
\newskip\tabcentering \tabcentering=1000pt plus 1000pt minus 1000pt
\def\MCH#1#2{\setbox0=\hbox{\raise#1\hbox{#2}}\smash{\box0}}% move char
\let\@oddfoot\@empty  \let\@evenfoot\@empty
\def\@cite#1#2{{#1\if@tempswa,#2\fi}}
\renewcommand\@biblabel[1]{[${#1}$]}%将{}中的#1改变，可得到所需形式
\def\@evenhead{\thepage}
\def\@oddhead{\hbox to \textwidth{\footnotesize{\it
On the Random Conjugate Spaces of a Random Locally
Convex Module} \hfill\thepage}}
\begin{document}

%====================================================================%
%  带节号的编号方式： 对应下面的 Method 2
%====================================================================%
%%\renewcommand{\thetable}{\arabic{section}.\arabic{table}}
\renewcommand{\theequation}{\arabic{section}.\arabic{equation}}
\makeatletter      % '@' is now a normail "letter" for TeX
\@addtoreset{equation}{section}
\makeatother       % '@' is restored as a "non-letter" character for TeX

%%\renewcommand{\thefigure}{\arabic{section}.\arabic{figure}}
%====================================================================%
%  不带节号的编号方式： 对应下面的 Method 1
%====================================================================%
\renewcommand{\thetable}{\arabic{table}}
\renewcommand{\thefigure}{\arabic{figure}}

%====================================================================%
%          中文文档定理结构的设置,重定义一些正文相关标题             %
%                    针对英文稿设置                                  %
%====================================================================%
%\usepackage{amsthm}
\newtheoremstyle{mythm}{3pt}{3pt}{}{}{\bfseries}{}{2mm}{}
\theoremstyle{mythm}

\newtheorem{definition}{{\bf{Definition}}}[section]
\newtheorem{proposition}{{\bf{Proposition}}}[section]
\newtheorem{property}{{\bf{Property}}}[section]
\newtheorem{theorem}{{\bf{Theorem}}}[section]
\newtheorem{lemma}{{\bf{Lemma}}}[section]
\newtheorem{corollary}{{\bf{Corollary}}}[section]
\newtheorem{axiom}{\hspace{2em}{\bf{Axiom}}}[section]
\newtheorem{exercise}{\hspace{2em}{\bf{Exercise}}}[section]
\newtheorem{question}{\hspace{2em}{\bf{Question}}}
\newtheorem{example}{{\bf{Example}}}
\newtheorem{notation}{\hspace{2em}{\bf{Notation}}}
\newtheorem{remark}{\hspace{2em}{\bf{Remark}}}

%%%%%%%%%%%%%%%%%%%%%%%%%%%%%%%%%%%%%%%%%%%%%%%%%%%%%%%%%%%%%%%%
%%-------------------- 作者提供的信息 ------------------------%%
%%%%%%%%%%%%%%%%%%%%%%%%%%%%%%%%%%%%%%%%%%%%%%%%%%%%%%%%%%%%%%%%
          % 仅用于页眉, 与\title{}中的一致 -- 用中文
\newcommand{\authorsinfo}{Corresponding author}

%%%%%%%%%%%%%%%%%%%%%%%%%%%%%%%%%%%%%%%%%%%%%%%%%%%%%%%%%%%%%%%%
% 标题,作者, 通信地址定义
%%%%%%%%%%%%%%%%%%%%%%%%%%%%%%%%%%%%%%%%%%%%%%%%%%%%%%%%%%%%%%%%
%%\begin{CJK}{GBK}{song}
\title{On the Random Conjugate Spaces of a Random Locally
Convex Module  %%%   Main Title of your paper  %%%
\footnote{Supported by National Natural Science Foundation of China (Grant No. 10871016)}}
%%%%%%%%%%%%%%%%%%%%%%%%%%%%%%%%%%%%%%%%%%%%%%%%%%%%%%%%%%%%%%%%
% 作者姓名与单位 ： 三种形式中选一种
% 后面中文摘要中的名字和单位同样处理
% ---------------------
% 第一种形式: 单一作者
% ---------------------
%\author{\textsc{Author1}\\[-1pt]
%(\textit{\zihao{-5} Author's Working Unit (Up to Department), Province Zip ~ Code}) \\[-2pt]}

% ---------------------
% 第二种形式: 同一单位 多个作者 -- 名字左右并列,
% ---------------------
\author{\bf{Guo Tiexin}\footnote{\authorsinfo}, Zhao Shien\\[-1pt]
\textit{\small LMIB and School of Mathematics and Systems Science,
Beihang University,
Beijing 100191, PR.} \\[-2mm]
\textit{\small China}\\[-2pt]
\textit{\small E-mail: txguo@buaa.edu.cn}}

% ---------------------
% 第三种形式: 不同单位 多个作者 -- 名字与单位上下并列
% ---------------------
%\author{\textsc{Author1}\\[-1pt]
%(\textit{\zihao{-5} First Author's Working Unit (Up to Department), Province Zip ~ Code}) \\[-2pt]
%\textsc{Author2}\\[-1pt]
%(\textit{\zihao{-5} Second Author's Working Unit (Up to Department), Province Zip ~ Code}) \\[-2pt]}
%%%%%%%%%%%%%%%%%%%%%%%%%%%%%%%%%%%%%%%%%%%%%%%%%%%%%%%%%%%%%%%%
\date{}
\maketitle

\vspace{-6mm}
%%%%%%%%%%%%%%%%%%%%%%%%%%%%%%%%%%%%%%%%%%%%%%%%%%%%%%%%%%%%%%%%
%  英文文摘要
%%%%%%%%%%%%%%%%%%%%%%%%%%%%%%%%%%%%%%%%%%%%%%%%%%%%%%%%%%%%%%%%

\vskip 1 mm

\noindent{\small {\small\bf Abstract}~~Theoretically speaking, there
are four kinds of possibilities to define the random conjugate
space of a random locally convex module. The purpose of this paper
is to prove that among the four kinds there are only two which are
universally suitable for the current development of the theory of
random conjugate spaces: in this process we also obtain a somewhat
surprising and crucial result that for a random normed module with
base $(\Omega,{\cal F},P)$ such that $(\Omega,{\cal F},P)$ is nonatomic
then the random normed module is a totally disconnected topological space when it is
endowed with the locally $L^{0}-$convex topology. \ \

\vspace{1mm}\baselineskip 12pt

\noindent{\small\bf Keywords} ~~Random locally convex modules,
$(\varepsilon, \lambda)-$topology, locally $L^{0}-$convex topology,
random conjugate spaces\ \

\noindent{\small\bf MR(2000) Subject Classification}~ 46A20, 46A22, 46A16,
46A05, 46A25\ \ {\rm }}
%%%%%%%%%%%%%%%%%%%%%%%%%%%%%%%%%%%%%%%%%%%%%%%%%%%%%%%%%%%%%%%%
%  正文由此开始
%%%%%%%%%%%%%%%%%%%%%%%%%%%%%%%%%%%%%%%%%%%%%%%%%%%%%%%%%%%%%%%%

\section{Introduction and main results}

The notion of a random locally convex module is a random generalization of that of a locally convex space.
However, the theory of classical conjugate spaces for locally convex spaces universally fails to serve for
the development of random locally convex modules, which motivated us to have developed the theory of random
conjugate spaces. Now, the theory of random conjugate spaces
has played an essential role in both the theory of random locally convex modules and the theory of conditional
risk measures, see \cite{Guo-Relations between, Guo-progress} for details.

Since there are two kinds of useful topologies for every random locally convex module, namely
the $(\varepsilon,\lambda)-$topology introduced by Guo in \cite{Guo-Survey} and the locally
$L^{0}-$convex topology introduced by Filipovi\'c, et.al in \cite{FKV}, we can naturally consider
four kinds of possibilities to define the random conjugate space of a random locally convex module.
The purpose of this paper is to further discuss the relations among the four kinds of definitions.

To introduce the main results of this paper, let us first recall some notation and terminology as follows.

Throughout this paper, $(\Omega,\mathcal {F},P)$ denotes a
probability space, $K$ the real number field $R$ or the complex number field $C$ and
$L^{0}(\mathcal {F},K)$ the algebra of equivalence classes of
$K-$valued measurable random variables on $\Omega$
under the ordinary scalar multiplication, addition and
multiplication operations on equivalence classes.

Given a random locally convex module $(E,\mathcal{P})$ over $K$ with
base $(\Omega,\mathcal {F},P)$, let
$\mathcal{T}_{\varepsilon,\lambda}$ and $\mathcal{T}_{c}$ denote the
$(\varepsilon,\lambda)-$topology and the locally $L^{0}-$convex
topology for $E$, respectively, see \cite{Guo-Relations between, FKV} and also Section 2 for the definitions of
these two kinds of topologies. $L^{0}(\mathcal{F},K)$, as a
special random locally convex module,  also has the corresponding
two kinds of topologies. The $(\varepsilon,\lambda)-$topology for
$L^{0}(\mathcal{F},K)$ is exactly the topology of convergence in
probability $P$, $(L^{0}(\mathcal{F},K),\mathcal
{T}_{\varepsilon,\lambda})$ is a topological algebra over $K$, and in particular when $(E,\mathcal{P})$ is endowed with the $(\varepsilon,\lambda)-$topology $(E,\mathcal{P})$ is a topological module over the topological algebra $(L^{0}(\mathcal{F},K),\mathcal
{T}_{\varepsilon,\lambda})$.
However, $(L^{0}(\mathcal{F},K),\mathcal{T}_{c})$ is only a topological ring, namely the
locally $L^{0}-$convex topology for $L^{0}(\mathcal{F},K)$ is not necessarily a linear topology (see \cite{FKV} for details), and in particular when $(E,\mathcal{P})$ is endowed with the locally $L^{0}-$convex topology  $(E,\mathcal{P})$ is a topological module over the topological ring $(L^{0}(\mathcal{F},K),\mathcal
{T}_{c})$.

We can now introduce the following:

\begin{definition} \label{def:contmodule}Let $(E,\mathcal{P})$ be a
random locally convex module over $K$ with base
$(\Omega,\mathcal{F},P)$ and define $E_{\varepsilon,\lambda}^{\ast}$,
$E_{c}^{\ast}$, $E_{max}^{\ast}$ and $E_{min}^{\ast}$ as follows:
$$(1)~~~E_{\varepsilon,\lambda}^{\ast}=\{f~|~f~is~a~continuous~module~homomorphism~from~(E,{\cal
T}_{\varepsilon,\lambda})~to~(L^{0}(\mathcal {F},K),{\cal
T}_{\varepsilon,\lambda})\},~$$
$$(2)~~~E_{c}^{\ast}=\{f~|~f~is~a~continuous~module~homomorphism~from~(E,{\cal
T}_{c})~to~(L^{0}(\mathcal {F},K),{\cal T}_{c})\},~~~~~~$$
$$(3)~~~E_{max}^{\ast}=\{f~|~f~is~a~continuous~module~homomorphism~from~(E,{\cal
T}_{c})~to~(L^{0}(\mathcal {F},K),{\cal
T}_{\varepsilon,\lambda})\},$$ where $(L^{0}(\mathcal {F},K),{\cal
T}_{\varepsilon,\lambda})$ is regarded as a topological ring;
$$(4)~~~E_{min}^{\ast}=\{f~|~f~is~a~continuous~module~homomorphism~from~(E,{\cal
T}_{\varepsilon,\lambda})~to~(L^{0}(\mathcal {F},K),{\cal
T}_{c})\},~$$ where $(E,{\cal T}_{\varepsilon,\lambda})$ is regarded
as a topological module over the topological ring $(L^{0}(\mathcal
{F},K),{\cal T}_{\varepsilon,\lambda})$.
\end{definition}

$E_{\varepsilon,\lambda}^{\ast}$ was first introduced in \cite{Guo-Survey} and deeply developed in \cite{Guo-Peng,Guo-Zhu}. Companying the locally $L^{0}-$convex topology $E_{c}^{\ast}$ first occurred in \cite{FKV} in an anonymous way, but one of the recent results in \cite{Guo-Relations between} showed that there had been another equivalent formulation of $E_{c}^{\ast}$ before the locally $L^{0}-$convex topology occurred, namely $E^{\ast}_{I}$ (see Definition \ref{def:homomorphismI} below).   It is well known from \cite{Guo-Relations between} that
$E_{c}^{\ast}\subset E_{\varepsilon,\lambda}^{\ast}$ generally, and
$E_{c}^{\ast}= E_{\varepsilon,\lambda}^{\ast}$ if $\mathcal{P}$ has
the countable concatenation property (see Section 2).

With the above preparations, we present the main results of this
paper as follows:

\begin{theorem}\label{thm:maxel}Let $(E,\mathcal {P})$ be a random
locally convex module over $K$ with base $(\Omega,\mathcal {F}, P)$.
Then $E^{\ast}_{max}=E^{\ast}_{\varepsilon,\lambda}$.
\end{theorem}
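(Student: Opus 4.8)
The plan is to prove both inclusions, the inclusion $E^{\ast}_{\varepsilon,\lambda}\subseteq E^{\ast}_{max}$ being immediate and the reverse inclusion carrying the whole content of the theorem. For the easy direction, recall (Section 2) that on $E$ the locally $L^{0}$-convex topology is finer than the $(\varepsilon,\lambda)$-topology, i.e.\ $\mathcal{T}_{\varepsilon,\lambda}\subseteq\mathcal{T}_{c}$. Hence any module homomorphism $f$ that is continuous from $(E,\mathcal{T}_{\varepsilon,\lambda})$ to $(L^{0}(\mathcal{F},K),\mathcal{T}_{\varepsilon,\lambda})$ is a fortiori continuous from $(E,\mathcal{T}_{c})$ to $(L^{0}(\mathcal{F},K),\mathcal{T}_{\varepsilon,\lambda})$, so $f\in E^{\ast}_{max}$. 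For the reverse inclusion I fix $f\in E^{\ast}_{max}$; since $f$ is additive and the topologies $\mathcal{T}_{c}$, $\mathcal{T}_{\varepsilon,\lambda}$ on $E$ and $\mathcal{T}_{\varepsilon,\lambda}$ on $L^{0}(\mathcal{F},K)$ are all translation invariant, it suffices to prove that $f\colon(E,\mathcal{T}_{\varepsilon,\lambda})\to(L^{0}(\mathcal{F},K),\mathcal{T}_{\varepsilon,\lambda})$ is continuous at $0$.

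The device that makes this work is the $L^{0}$-module structure, which forces $f$ (and the seminorms) to be local: for every $A\in\mathcal{F}$ and $x\in E$ one has $f(\tilde{I}_{A}x)=\tilde{I}_{A}f(x)$ and $\|\tilde{I}_{A}x\|_{Q}=\tilde{I}_{A}\|x\|_{Q}$, where $\tilde{I}_{A}\in L^{0}(\mathcal{F},K)$ is the equivalence class of the characteristic function of $A$, $Q\subseteq\mathcal{P}$ is finite, and $\|\cdot\|_{Q}=\bigvee_{\rho\in Q}\rho$. Recall the neighborhood bases at $0$: for $\mathcal{T}_{c}$ on $E$, the sets $\{x\in E:\|x\|_{Q}\le\xi\}$ with $Q\subseteq\mathcal{P}$ finite and $\xi\in L^{0}(\mathcal{F})$ strictly positive on $\Omega$; for $\mathcal{T}_{\varepsilon,\lambda}$ on $E$, the sets $N_{Q}(\delta,\mu)=\{x\in E:P(\|x\|_{Q}>\delta)<\mu\}$; and for $\mathcal{T}_{\varepsilon,\lambda}$ on $L^{0}(\mathcal{F},K)$, the sets $\{\eta:P(|\eta|>\varepsilon)<\lambda\}$. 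Now, given $\varepsilon'>0$ and $\lambda'\in(0,1)$, continuity of $f$ at $0$ for $(\mathcal{T}_{c},\mathcal{T}_{\varepsilon,\lambda})$ produces a finite $Q\subseteq\mathcal{P}$ and a strictly positive $\xi\in L^{0}(\mathcal{F})$ such that $\|x\|_{Q}\le\xi$ implies $P(|f(x)|>\varepsilon')<\lambda'/3$.

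The remaining step is a truncation. Since $\xi>0$ almost surely, choose $c>0$ with $P(\xi<c)<\lambda'/3$; I claim $f\bigl(N_{Q}(c,\lambda'/3)\bigr)\subseteq\{\eta:P(|\eta|>\varepsilon')<\lambda'\}$, which finishes the argument. Indeed, for $x\in N_{Q}(c,\lambda'/3)$ put $A=\{\|x\|_{Q}\le c\}\cap\{\xi\ge c\}$, so $P(A^{c})<2\lambda'/3$, and set $y=\tilde{I}_{A}x$. On $A$ one has $\|y\|_{Q}=\|x\|_{Q}\le c\le\xi$, while off $A$ one has $\|y\|_{Q}=0\le\xi$; hence $\|y\|_{Q}\le\xi$ a.s., so $P(|f(y)|>\varepsilon')<\lambda'/3$. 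Since $f(y)=\tilde{I}_{A}f(x)$ agrees with $f(x)$ on $A$, it follows that $\{|f(x)|>\varepsilon'\}\subseteq\{|f(y)|>\varepsilon'\}\cup A^{c}$, so $P(|f(x)|>\varepsilon')<\lambda'/3+2\lambda'/3=\lambda'$, as claimed. The only genuine obstacle is to recognize that it is precisely the \emph{coarseness} of the target topology $\mathcal{T}_{\varepsilon,\lambda}$ on $L^{0}(\mathcal{F},K)$ (as opposed to $\mathcal{T}_{c}$, which is used for $E^{\ast}_{c}$) that lets this localization close: the constant truncation $\{\xi\ge c\}$ of the strictly positive $\xi$ is only large in probability, which keeps $f(y)$ inside an $(\varepsilon,\lambda)$-ball of $L^{0}(\mathcal{F},K)$ but would not keep it inside an $L^{0}$-ball $\{|\eta|\le\xi\}$; everything else — the shape of the two neighborhood bases, and the fact that scaling by $\tilde{I}_{A}$ commutes with $\|\cdot\|_{Q}$ and with $f$ — is routine from the $L^{0}$-(semi)norm axioms and the module-homomorphism property.
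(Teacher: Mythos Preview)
Your proof is correct and is genuinely different from the paper's. The paper does not verify $(\mathcal{T}_{\varepsilon,\lambda},\mathcal{T}_{\varepsilon,\lambda})$-continuity directly; instead it shows $E^{\ast}_{max}\subseteq E^{\ast}_{II}$ by constructing the explicit type-II bound of Definition~\ref{def:homomorphismII}. Concretely, the authors pick sequences $\varepsilon_n,\lambda_n\searrow 0$, use $(\mathcal{T}_c,\mathcal{T}_{\varepsilon,\lambda})$-continuity to get $f(B_{\mathcal{Q}_n}(\bar{\varepsilon}_n))\subset V_0(\varepsilon_n,\lambda_n)$ with the $\mathcal{Q}_n$ increasing, set $\eta_n=\bigvee\{|f(x)|:x\in B_{\mathcal{Q}_n}(\bar{\varepsilon}_n)\}$, and argue that $P[\eta_n<\infty]\nearrow 1$; the resulting countable partition $\{A_n\}$ of $\Omega$ together with Lemma~\ref{lem:[8]} (the $\bigvee$-over-the-unit-ball criterion from \cite{Guo-Zhu}) then yields $|f(x)|\le\sum_n \tilde{I}_{A_n}\xi_n\|x\|_{\mathcal{Q}_n}$, after which Proposition~\ref{prop:continuousI} gives $f\in E^{\ast}_{\varepsilon,\lambda}$.

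By contrast, your argument bypasses both Lemma~\ref{lem:[8]} and the type-II characterization entirely: the single truncation $y=\tilde{I}_A x$ with $A=[\|x\|_Q\le c]\cap[\xi\ge c]$, combined with the module identities $\|\tilde{I}_A x\|_Q=\tilde{I}_A\|x\|_Q$ and $f(\tilde{I}_A x)=\tilde{I}_A f(x)$, converts the $L^0$-ball $\{\|\cdot\|_Q\le\xi\}$ into an $(\varepsilon,\lambda)$-ball at the cost of a set of small probability, and the coarseness of $\mathcal{T}_{\varepsilon,\lambda}$ on the target absorbs that cost. This is shorter and more elementary. The paper's route, on the other hand, buys something structural: it actually exhibits the countable partition and the pointwise estimate that witness membership in $E^{\ast}_{II}$, so one sees explicitly how the a.s.\ bound is pieced together from the $\mathcal{T}_c$-neighborhoods.
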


\begin{theorem}\label{thm:minel}  Let $(E,\mathcal {P})$ be a random
locally convex module over $K$ with base $(\Omega,\mathcal {F}, P)$ and
$f\in E^{\ast}_{min}$. If $(\Omega,\mathcal {F}, P)$ is a nonatomic
probability space, then $f(x)=0, \forall x\in E$.
\end{theorem}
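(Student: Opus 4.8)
The plan is to show that any $f\in E^{\ast}_{min}$ must vanish by exploiting the mismatch between the continuity required (continuity into the locally $L^0$-convex topology $\mathcal{T}_c$ on $L^0(\mathcal{F},K)$) and the comparatively coarse source topology $\mathcal{T}_{\varepsilon,\lambda}$ on $E$, using the nonatomicity of $(\Omega,\mathcal{F},P)$ in an essential way. First I would fix $f\in E^{\ast}_{min}$ and an arbitrary $x\in E$, and suppose for contradiction that $f(x)\neq 0$, i.e. the event $A=\{\omega : f(x)(\omega)\neq 0\}$ has $P(A)>0$. Since $f$ is a module homomorphism, for every $\xi\in L^0(\mathcal{F},K)$ we have $f(\xi x)=\xi f(x)$, so by choosing $\xi$ supported on $A$ and scaling freely we can arrange $f(\xi_n x)=\xi_n f(x)$ to be a sequence in $L^0$ that does \emph{not} converge to $0$ in the locally $L^0$-convex topology, while simultaneously $\xi_n x\to 0$ in $(E,\mathcal{T}_{\varepsilon,\lambda})$. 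The key mechanism is that $\mathcal{T}_{\varepsilon,\lambda}$-convergence is essentially convergence ``in probability after localization,'' so one can let $\xi_n$ blow up on sets of probability tending to $0$: then $\xi_n x\to 0$ in $\mathcal{T}_{\varepsilon,\lambda}$ but $\xi_n f(x)$ stays ``large'' in the $L^0$-sense (large on a set of small but positive measure), which is exactly the kind of sequence that fails to converge to $0$ in $\mathcal{T}_c$.

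The technical heart is to make ``stays large in the $\mathcal{T}_c$-sense'' precise and to use nonatomicity. A neighborhood base of $0$ for $\mathcal{T}_c$ on $L^0$ is given by sets of the form $\{\eta\in L^0 : |\eta|\le \varepsilon \text{ on } \Omega\}$ for $\varepsilon\in L^0_{++}$ (strictly positive random variables), equivalently the $L^0$-balls $B_\varepsilon$. So to show $\xi_n f(x)\not\to 0$ in $\mathcal{T}_c$ I must produce a single $\varepsilon\in L^0_{++}$ such that $\xi_n f(x)\notin B_\varepsilon$ for all $n$ (or infinitely many $n$). Here is where nonatomicity enters: on the positive-measure set $A$ I can split $A$ into countably many disjoint pieces $A_1,A_2,\dots$ of positive measure (possible precisely because $(\Omega,\mathcal{F},P)$ is nonatomic), put the ``bump'' of $\xi_n$ on $A_n$ with height chosen so that $|\xi_n f(x)|$ dominates any prescribed $\varepsilon$ on $A_n$, while $P(A_n)\to 0$ guarantees $\xi_n x\to 0$ in $\mathcal{T}_{\varepsilon,\lambda}$. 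Since the $A_n$ are disjoint, no fixed $\varepsilon\in L^0_{++}$ can eventually dominate $|\xi_n f(x)|$ on its whole support, giving the desired contradiction with continuity of $f$ at $0$.

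Therefore the steps, in order, are: (i) recall the neighborhood bases of $0$ for $\mathcal{T}_{\varepsilon,\lambda}$ (sets $\{y : P(\|y\|>\lambda)\le\varepsilon\}$-type, more precisely $V_{\varepsilon,\lambda}$ from the random seminorm family $\mathcal{P}$) and for $\mathcal{T}_c$ on $L^0$ (the $L^0_{++}$-balls); (ii) assume $f(x)\neq 0$ on a set $A$ with $P(A)>0$; (iii) use nonatomicity to partition $A$ into positive-measure sets $A_n$ with $P(A_n)\to 0$; (iv) define $\xi_n = c_n \mathbf{1}_{A_n}$ with constants $c_n\to\infty$ chosen fast enough, verify $\xi_n x\to 0$ in $\mathcal{T}_{\varepsilon,\lambda}$ (only the $P(A_n)\to 0$ and boundedness of the random seminorms of $x$ on $A_n$ matter, after truncating $x$ on a large-probability set); (v) observe $f(\xi_n x)=\xi_n f(x)$ does not $\mathcal{T}_c$-converge to $0$ because for any candidate $\varepsilon\in L^0_{++}$ one can pick $c_n$ large to violate $|\xi_n f(x)|\le\varepsilon$ on $A_n$; (vi) conclude $f$ is not $\mathcal{T}_c$-continuous at $0$, contradiction, hence $f(x)=0$. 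The main obstacle I anticipate is step (iv): verifying $\xi_n x\to 0$ in $\mathcal{T}_{\varepsilon,\lambda}$ requires care because $x$ need not have bounded random seminorms globally, so one should first restrict attention to a set $\Omega_0$ with $P(\Omega_0)>1-\delta$ on which all relevant random seminorms of $x$ are bounded, intersect the $A_n$ with $\Omega_0$ (still positive measure by nonatomicity and shrinking $\delta$), and only then the growth of $c_n$ on the shrinking sets $A_n\cap\Omega_0$ is harmless for the $(\varepsilon,\lambda)$-topology while still catastrophic for $\mathcal{T}_c$; making these two requirements compatible via a single diagonal choice of constants is the delicate bookkeeping.
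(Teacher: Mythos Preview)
Your approach is essentially correct and gives a genuinely different, more elementary proof than the paper's. The paper proceeds conceptually: it first proves (Lemmas 3.6--3.7) that for a nonatomic base the space $(L^{0}(\mathcal{F},K),\mathcal{T}_c)$ is Hausdorff and \emph{totally disconnected}, by exhibiting an explicit clopen $L^{0}$-convex neighborhood of $0$; since $(E,\mathcal{T}_{\varepsilon,\lambda})$ is a topological vector space over $K$ and hence connected, the continuous image $f(E)$ must be a single point, namely $0$. Your argument bypasses the topological structure theorem and works directly with a sequence, trading the structural insight (which the paper advertises in its abstract as ``somewhat surprising and crucial'') for a shorter self-contained contradiction.

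Two points deserve tightening. First, your ``main obstacle'' in step (iv) is a non-issue: once $\xi_n$ is supported on $A_n$, each $L^{0}$-seminorm satisfies $\|\xi_n x\|_{\mathcal{Q}}=|\xi_n|\,\|x\|_{\mathcal{Q}}=0$ on $A_n^{c}$, so $P(\|\xi_n x\|_{\mathcal{Q}}\ge\epsilon)\le P(A_n)\to 0$ regardless of how large $|\xi_n|$ is on $A_n$; no truncation of $x$ is needed. Second, the quantifiers in step (v) are stated backwards: you cannot ``pick $c_n$ large'' \emph{after} a candidate $\varepsilon\in L^{0}_{++}$ is given, since the sequence $(\xi_n)$ must be fixed once and for all. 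The clean fix is to normalize from the start: set $\xi_n=\tilde I_{A_n}\cdot Q(f(x))$ (the generalized inverse, well-defined since $|f(x)|>0$ on $A_n\subset A$), so that $|f(\xi_n x)|=\tilde I_{A_n}$ identically. Then the single constant $\varepsilon=\tfrac12\in L^{0}_{++}$ witnesses non-convergence, because $\tilde I_{A_n}\le\tfrac12$ fails on $A_n$ with $P(A_n)>0$ for every $n$. With these adjustments your argument goes through cleanly.
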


From the above two theorems, it is easy to see that among the four
kinds of random conjugate spaces only
$E^{\ast}_{\varepsilon,\lambda}$ and $E^{\ast}_{c}$ are universally suitable for the current development of the theory of random
conjugate spaces.

The remainder of this paper is organized as follows: in Section 2 we will
briefly collect some necessary known facts and in Section 3 we will prove our main
results.

\section{Preliminaries}

Denote by $\bar{L}^{0}(\mathcal{F},R)$ the set of all equivalence
classes of extended $R-$valued measurable functions on
$(\Omega,\mathcal {F},P)$. Then it is well known from
\cite{Dunford-Schwartz} that $\bar{L}^{0}(\mathcal {F},R)$ is a
complete lattice under the ordering $\leq$: $\xi\leq\eta$ iff
$\xi^{0}(\omega)\leq\eta^{0}(\omega)$, for almost all $\omega$ in
$\Omega$ (briefly, a.s.), where $\xi^{0}$ and $\eta^{0}$ are
arbitrarily chosen representatives of $\xi$ and $\eta$,
respectively. Furthermore, every subset $G$ of $\bar{L}^{0}(\mathcal
{F},R)$ has a supremum, denoted by $\bigvee G$, and an infimum,
denoted by $\bigwedge G$. Finally $L^{0}(\mathcal {F},R)$, as a
sublattice of $\bar{L}^{0}(\mathcal {F},R)$, is also a complete
lattice in the sense that every subset with upper bound has a
supremum. The pleasant properties of $\bar{L}^{0}(\mathcal{F},R)$
are summarized as follows:

\begin{proposition}[\cite{Dunford-Schwartz}]\label{prop:directe} For
every subset $G$ of $\bar{L}^{0}(\mathcal {F},R)$ there exist
countable subsets $\{a_{n}~|~n\in N\}$ and $\{b_{n}~|~n\in N\}$ of
$G$ such that $\bigvee G=\bigvee_{n\geq1}a_{n}$ and $\bigwedge
G=\bigwedge_{n\geq1}b_{n}$. Further, if $G$ is directed $($dually
directed$)$ with respect to $\leq$, then the above $\{a_{n}~|~n\in
N\}$ $($accordingly, $\{b_{n}~|~n\in N\}$$)$ can be chosen as
nondecreasing $($correspondingly, nonincreasing$)$ with respect to
$\leq$.
\end{proposition}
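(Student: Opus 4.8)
This is the classical \emph{essential supremum} construction, and I would prove it by establishing the statement for $\bigvee$ only; the statement for $\bigwedge$ then follows by applying the result to $-G:=\{-g\mid g\in G\}$ together with the identity $\bigwedge G=-\bigvee(-G)$, and the dually directed case is just the upward directed case for $-G$. To land in an integrable setting, fix a strictly increasing bijection $\varphi\colon[-\infty,+\infty]\to[-1,1]$, say $\varphi(t)=t/(1+|t|)$ with $\varphi(\pm\infty)=\pm1$. Composition with $\varphi$ induces an order isomorphism of $\bar{L}^{0}(\mathcal{F},R)$ onto $\{h\mid |h|\le1\ \text{a.s.}\}$, which therefore carries $\bigvee G$ to $\bigvee\varphi(G)$; so we may assume from the start that every member of $G$ has a representative bounded in absolute value by $1$, hence lies in $L^{0}(\mathcal{F},R)$ and is $P$-integrable.

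\textbf{Construction.} Let $\mathcal{D}$ be the collection of all finite joins $g_{1}\vee\cdots\vee g_{k}$ with $g_{i}\in G$; it is upward directed and consists of functions bounded by $1$. Set $s:=\sup\{\int_{\Omega}h\,dP\mid h\in\mathcal{D}\}\in[-1,1]$ and pick $h_{n}\in\mathcal{D}$ with $\int_{\Omega}h_{n}\,dP\to s$; replacing $h_{n}$ by $h_{1}\vee\cdots\vee h_{n}$, which still lies in $\mathcal{D}$, we may take $(h_{n})$ nondecreasing without changing $\lim_{n}\int_{\Omega}h_{n}\,dP=s$. Let $h^{\ast}:=\bigvee_{n\ge1}h_{n}$, the a.s. monotone limit of $(h_{n})$; by monotone convergence $\int_{\Omega}h^{\ast}\,dP=s$.

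\textbf{Verification that $h^{\ast}=\bigvee G$.} Every upper bound of $G$ dominates each finite join $h_{n}$, hence dominates $h^{\ast}$, so it suffices to show $h^{\ast}\ge g$ for all $g\in G$. If this failed, there would be $g\in G$ with $P(g>h^{\ast})>0$, whence $\int_{\Omega}(g\vee h^{\ast})\,dP>\int_{\Omega}h^{\ast}\,dP=s$. But $g\vee h^{\ast}$ is the nondecreasing a.s. limit of the functions $g\vee h_{n}\in\mathcal{D}$, each of integral $\le s$, so monotone convergence forces $\int_{\Omega}(g\vee h^{\ast})\,dP\le s$ --- a contradiction. Thus $h^{\ast}$ is the least upper bound of $G$. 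Writing each $h_{n}$ explicitly as $g_{n,1}\vee\cdots\vee g_{n,k_{n}}$ with $g_{n,j}\in G$ and letting $\{a_{m}\mid m\in N\}$ enumerate $\{g_{n,j}\mid n\in N,\ 1\le j\le k_{n}\}\subset G$, we obtain $\bigvee_{m\ge1}a_{m}=\bigvee_{n\ge1}h_{n}=h^{\ast}=\bigvee G$. I expect this maximality step --- ruling out that $h^{\ast}$ misses some member of $G$ on a set of positive measure --- to be the only real point of the proof; the rest is bookkeeping with monotone convergence.

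\textbf{The directed case.} If $G$ is upward directed, define $\tilde g_{n}\in G$ by induction: given $\tilde g_{1},\dots,\tilde g_{n-1}$, apply directedness finitely many times to find $\tilde g_{n}\in G$ with $\tilde g_{n}\ge\tilde g_{n-1}$ and $\tilde g_{n}\ge g_{n,j}$ for $1\le j\le k_{n}$, so that $\tilde g_{n}\ge h_{n}$. Then $(\tilde g_{n})$ is nondecreasing, $\tilde g_{n}\le\bigvee G$ because $\tilde g_{n}\in G$, and $\bigvee_{n\ge1}\tilde g_{n}\ge\bigvee_{n\ge1}h_{n}=\bigvee G$; hence $\bigvee_{n\ge1}\tilde g_{n}=\bigvee G$ with $(\tilde g_{n})$ nondecreasing in $G$. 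Transporting everything back through $\varphi^{-1}$ and passing to $-G$ as above yields all the assertions of the proposition.
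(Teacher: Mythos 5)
Your proof is correct. The paper does not prove this proposition at all --- it is quoted from Dunford--Schwartz --- and your argument is exactly the classical essential-supremum construction found there: transport to bounded functions via an order isomorphism, maximize the integral over finite joins, and use monotone convergence to show the resulting countable join cannot miss any $g\in G$ on a set of positive measure; the reduction of $\bigwedge$ to $\bigvee$ by negation and the handling of the directed case by inductively choosing upper bounds in $G$ are both sound (modulo the trivial convention that $G$ is nonempty).
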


Specially, $L^{0}_{+}=\{\xi\in L^{0}(\mathcal{F},R)~|~\xi\geq 0\}$,
$L^{0}_{++}=\{\xi\in L^{0}(\mathcal{F},R)~|~\xi>0$ on $\Omega\}$,
where for $A\in \mathcal{F}$, $``\xi>\eta"$ on $A$ means
$\xi^{0}(\omega)>\eta^{0}(\omega)$ a.s. on $A$ for any chosen
representatives $\xi^{0}$ and $\eta^{0}$ of $\xi$ and $\eta$,
respectively. As usual, $\xi>\eta$ means $\xi\geq\eta$ and
$\xi\neq\eta$. For an arbitrarily chosen representative $\xi^{0}$ of
$\xi\in L^{0}(\mathcal{F},K)$, define the two
$\mathcal{F}-$measurable random variables $(\xi^{0})^{-1}$ and
$|\xi^{0}|$ by $(\xi^{0})^{-1}(\omega)=1/\xi^{0}(\omega)$ if
$\xi^{0}(\omega)\neq 0$, and $(\xi^{0})^{-1}(\omega)=0$ otherwise,
and by $|\xi^{0}|(\omega)=|\xi^{0}(\omega)|$, $\forall\omega\in
\Omega$. Then the equivalent class $Q(\xi)$ of $(\xi^{0})^{-1}$ is
called the generalized inverse of $\xi$; the equivalent class
$|\xi|$ of $|\xi^{0}|$ is called the absolute value of $\xi$.

For any $A\in \mathcal{F}$, $A^{c}$ denotes the complement of
$A$, $\tilde{A}=\{B\in \mathcal{F}~|~P(A\Delta B)=0\}$
denotes the equivalence class of $A$, where $\Delta$ is the
symmetric difference operation, $I_{A}$ the characteristic function
of $A$, and $\tilde{I}_{A}$ is used to denote the equivalence class
of $I_{A}$; given two $\xi$ and $\eta$ in $L^{0}(\mathcal{F}, R)$,
and $A=\{\omega\in\Omega~|~\xi^{0}\neq\eta^{0}\}$, where $\xi^{0}$
and $\eta^{0}$ are arbitrarily chosen representatives of $\xi$ and
$\eta$ respectively, then we always write $[\xi\neq\eta]$ for the
equivalence class of $A$ and $I_{[\xi\neq\eta]}$ for
$\tilde{I}_{A}$, one can also understand the implication of such
notations as $I_{[\xi\leq\eta]}$, $I_{[\xi<\eta]}$ and
$I_{[\xi=\eta]}$.

\begin{definition}[\cite{Guo-Survey, G-C}]\label{def:linear}
$(1)$   Let $E$ be a linear space over $K$, then a mapping $f:
E\rightarrow L^{0}(\mathcal {F}, K)$ is called a random linear
functional on $E$ if $f$ is linear;

$(2)$  If $E$ is a linear space over
$R$, then a mapping $f: E\rightarrow L^{0}(\mathcal {F},R)$ is
called a random sublinear functional on $E$ if $f(\alpha
x)=\alpha\cdot f(x)$ for any positive real number $\alpha$
and $x\in E$, and $f(x+y)\leq f(x)+f(y),\forall x,y\in E$;

$(3)$  Let $E$
be a linear space over $K$, then a mapping $f: E\rightarrow
L_{+}^{0}$ is called a random seminorm on $E$ if $f(\alpha
x)=|\alpha|\cdot f(x), \forall \alpha \in K$ and $x\in E$, and
$f(x+y)\leq f(x)+f(y), \forall x,y\in E$;

$(4)$  Let $E$ be
a left module over the algebra $L^{0}(\mathcal {F},K)$, then a
mapping $f: E\rightarrow L^{0}(\mathcal {F}, K)$ is called a
$L^{0}-$linear functional on $E$ if $f$ is a module homomorphism;

$(5)$  Let $E$ be a left module over the algebra $L^{0}(\mathcal {F},R)$, a
mapping $f: E\rightarrow L^{0}(\mathcal {F}, R)$ is called an
$L^{0}$-sublinear functional on $E$ if $f$ is a random sublinear
functional on $E$ such that $f(\xi\cdot x)=\xi\cdot f(x), \forall\xi \in
L_{+}^{0}$ and $x\in E$;

$(6)$  Let $E$ be a left module over the algebra
$L^{0}(\mathcal {F},K)$, then a mapping $f: E\rightarrow L_{+}^{0}$
is called an $L^{0}$-seminorm on $E$ if $f$ is a random seminorm on
$E$ such that $f(\xi\cdot x)=|\xi|\cdot f(x), \forall\xi \in
L^{0}(\mathcal {F},K)$ and $x\in E$.
\end{definition}

\begin{definition}[\cite{Guo-Relations
between,Guo-Survey}]\label{def:locally module} An ordered pair
$(E,\mathcal {P})$ is called a random locally convex space over $K$
with base $(\Omega , \mathcal{F}, P)$ if the following three
conditions are satisfied:

(1)  $E$ is a linear space over $K$;

(2)  $\mathcal {P}$ is a family of random seminorms on $E$ with
base $(\Omega,\mathcal{F},P)$;

(3)  $\bigvee\{\|x\|~|~\|\cdot\|\in{\cal P}\}=0$ implies
$x=\theta$ $($the null element of $E$ $)$.\\
In addition, if $E$ is a left module over the algebra
$L^{0}(\mathcal{F},K)$ and each $\|\cdot\|$ in $\mathcal{P}$ is an
$L^{0}-$seminorm then such a random locally convex space is called
a random locally convex module over $K$ with base $(\Omega ,
\mathcal{F}, P)$.
\end{definition}

\begin{remark} Let $(E,\mathcal {P})$ be a random
locally convex space (a random locally convex module) over $K$ with
base $(\Omega , \mathcal{F}, P)$. If $\mathcal{P}$ degenerates to a
singleton $\{\|\cdot\|\}$, then $(E,\|\cdot\|)$ is exactly a random
normed space, briefly, an $RN$ space (correspondingly, a random
normed module, briefly, an $RN$ module). Specially,
$(L^{0}(\mathcal{F},K),|\cdot|)$ is an $RN$ module.
\end{remark}

In the sequel, for a random locally convex space $(E,{\cal P})$ with
base $(\Omega,{\cal F},P)$ and for each finite subfamily
$\mathcal{Q}$ of ${\cal P}$, $\|\cdot\|_{\mathcal{Q}}:E\rightarrow
L^{0}_{+}({\cal F})$ always denotes the random seminorm of $E$
defined by $\|x\|_{\mathcal{Q}}=\bigvee\{\|x\|~|~\|\cdot\|\in
\mathcal{Q}\},\forall x\in E$, and ${\cal F}({\cal P})$ the set of
finite subfamilies of ${\cal P}$.

For each random locally convex space $(E,\mathcal {P})$ over $K$
with base $(\Omega , \mathcal{F}, P)$, $\mathcal {P}$ can induce two kinds of
topologies, namely the
$(\varepsilon,\lambda)-$topology and the locally $L^{0}-$convex
topology.

\begin{definition}[\cite{Guo-Relations
between,Guo-progress,Guo-Survey}]\label{def:mintopology} Let $(E,\mathcal{P})$ be a
random locally convex space over $K$ with base
$(\Omega,\mathcal{F},P)$. For any positive real numbers
$\varepsilon$ and $\lambda$ such that $0<\lambda<1$, and any
$\mathcal {Q}\in \mathcal{F}(\mathcal{P})$, let $N_{\theta}(\mathcal
{Q},\varepsilon,\lambda)=\{x\in
E~|~P\{\omega\in\Omega~|~\|x\|_{\mathcal
{Q}}(\omega)<\varepsilon\}>1-\lambda\}$, then $\{N_{\theta}(\mathcal
{Q},\varepsilon,\lambda)~|~\mathcal {Q}\in
\mathcal{F}(\mathcal{P}),\varepsilon>0,0<\lambda<1\}$ is easily
verified to be a local base at the null vector $\theta$ of some
Hausdorff linear topology. The linear topology is called the
$(\varepsilon,\lambda)-$topology for $E$ induced by $\mathcal{P}$.
\end{definition}

From now on, the $(\varepsilon,\lambda)-$topology for each random
locally convex space is always denoted by $\mathcal
{T}_{\varepsilon,\lambda}$ when no confusion occurs.

\begin{proposition}[\cite{Guo-Relations
between,Guo-progress,Guo-Survey}] Let $(E,{\cal P})$ be a
random locally convex space over $K$ with base $(\Omega,{\cal
F},P)$.
Then we have the following statements:\\
\indent$(1)$ The $(\epsilon,\lambda)-$topology for $L^{0}({\cal
F},K)$ is exactly the topology of convergence in probability $P$,
and
$(L^{0}({\cal F},K),{\cal T}_{\epsilon,\lambda})$ is a topological algebra over $K$;\\
\indent$(2)$ If $(E,{\cal P})$ is a random locally convex module,
then $(E,{\cal T}_{\epsilon,\lambda})$ is a topological module over the topological algebra $L^{0}({\cal F},K)$;\\
\indent$(3)$ A net $\{x_{\delta},\delta\in\Gamma\}$ converges in the
$(\epsilon,\lambda)-$topology to some $x$ in $E$ iff for each
$\|\cdot\|\in{\cal P}$ $\{\|x_{\delta}-x\|,\delta\in\Gamma\}$
converges in probability $P$ to $0$.
\end{proposition}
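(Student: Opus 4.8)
~The plan is to verify the three assertions in turn, in each case by testing directly against the local base $\{N_{\theta}(\mathcal{Q},\varepsilon,\lambda)\mid\mathcal{Q}\in\mathcal{F}(\mathcal{P}),\varepsilon>0,0<\lambda<1\}$ of Definition~\ref{def:mintopology}. For (1), note that for the $RN$ module $(L^{0}(\mathcal{F},K),|\cdot|)$ the only finite subfamily of seminorms in play is $\{|\cdot|\}$, so the local base at $\theta$ becomes the family of sets $N_{\theta}(\varepsilon,\lambda)=\{\xi\in L^{0}(\mathcal{F},K)\mid P(|\xi|<\varepsilon)>1-\lambda\}$; these are precisely the standard basic neighbourhoods of $0$ for convergence in probability, so the two topologies coincide. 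That $(L^{0}(\mathcal{F},K),\mathcal{T}_{\varepsilon,\lambda})$ is a topological algebra then reduces to three continuity checks: addition from $|\xi+\eta|\le|\xi|+|\eta|$, the $K$-scalar action from $|\alpha\xi|=|\alpha|\,|\xi|$, and the ring multiplication, for which I would use the identity $|\xi\eta|=|\xi|\,|\eta|$ together with the fact that every fixed $\xi_{0}\in L^{0}(\mathcal{F},K)$ is tight (for each $\lambda$ there is $M$ with $P(|\xi_{0}|>M)<\lambda$) and that a convergent net is eventually uniformly tight, estimating $\xi\eta-\xi_{0}\eta_{0}=(\xi-\xi_{0})\eta+\xi_{0}(\eta-\eta_{0})$ termwise on a set of large probability.

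For (2), I would first observe that each finite supremum $\|\cdot\|_{\mathcal{Q}}$ is again an $L^{0}$-seminorm, so it suffices to run the argument with one such seminorm $\|\cdot\|$. Continuity of addition on $E$ is immediate from the triangle inequality. For the module action $L^{0}(\mathcal{F},K)\times E\to E$ I would write $\|\xi x-\xi_{0}x_{0}\|\le|\xi-\xi_{0}|\,\|x\|+|\xi_{0}|\,\|x-x_{0}\|$, using the $L^{0}$-seminorm identity $\|\eta y\|=|\eta|\,\|y\|$, then bound $\|x\|\le\|x_{0}\|+\|x-x_{0}\|$ and control $|\xi-\xi_{0}|\,\|x_{0}\|$ and $|\xi_{0}|\,\|x-x_{0}\|$ using the tightness of the fixed elements $\|x_{0}\|,|\xi_{0}|\in L^{0}_{+}$ and the estimate, already invoked in (1), that a product of two elements small in probability is small in probability.

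For (3), the forward direction: fixing $\|\cdot\|\in\mathcal{P}$ and applying the definition of convergence to the neighbourhoods $N_{\theta}(\{\|\cdot\|\},\varepsilon,\lambda)$ shows that eventually $P(\|x_{\delta}-x\|<\varepsilon)>1-\lambda$, and letting $\varepsilon,\lambda$ vary gives $\|x_{\delta}-x\|\to 0$ in probability. For the converse, given a basic neighbourhood $N_{\theta}(\mathcal{Q},\varepsilon,\lambda)$ with $\mathcal{Q}=\{\|\cdot\|_{1},\dots,\|\cdot\|_{n}\}$, I would pick for each $i$ an index past which $P(\|x_{\delta}-x\|_{i}\ge\varepsilon)<\lambda/n$, take a common upper bound in the directed index set, and use $\{\|x_{\delta}-x\|_{\mathcal{Q}}\ge\varepsilon\}\subseteq\bigcup_{i=1}^{n}\{\|x_{\delta}-x\|_{i}\ge\varepsilon\}$ to conclude that $x_{\delta}-x$ lies in $N_{\theta}(\mathcal{Q},\varepsilon,\lambda)$ eventually.

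Almost all of this is bookkeeping with the triangle inequality and the definition of the local base; the one genuinely non-formal ingredient is the continuity of the ring multiplication in (1) and of the module action in (2). I expect the subtle point to be the passage from sequences to nets in the tightness argument, namely verifying that a convergent net is eventually \emph{uniformly} tight, so that a bounded-tail factor times a factor small in probability remains small along the net; and it is precisely there that the full $L^{0}$-(semi)norm identities $|\xi\eta|=|\xi|\,|\eta|$ and $\|\xi x\|=|\xi|\,\|x\|$, rather than mere homogeneity in the scalar, are needed.
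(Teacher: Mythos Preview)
Your argument is sound and follows the natural route: identify the local base, then verify the continuity axioms via the triangle inequality and the $L^{0}$-seminorm identity $\|\xi x\|=|\xi|\,\|x\|$, with the tightness trick handling the bilinear terms. There is nothing to compare against, however, because the paper does not prove this proposition; it is quoted verbatim as a known fact from \cite{Guo-Relations between,Guo-progress,Guo-Survey} and used as background for the $(\varepsilon,\lambda)$-topology. So your proposal is not so much an alternative to the paper's proof as a reconstruction of a result the authors take for granted.

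One minor remark on the point you flagged yourself: for the continuity of multiplication (and of the module action) you do not actually need the net to be ``eventually uniformly tight'' as a separate lemma. It is enough to bound, for a fixed target neighbourhood $N_{\theta}(\varepsilon,\lambda)$, the factor $|\xi_{0}|$ (respectively $\|x_{0}\|$) by a constant $M$ off a set of probability $<\lambda/3$, then choose the net index so that $|\xi-\xi_{0}|<1$ and $\|x-x_{0}\|<\varepsilon/(3(M+1))$ each off sets of probability $<\lambda/3$; the cross term $|\xi-\xi_{0}|\,\|x-x_{0}\|$ is then automatically controlled. This avoids any appeal to uniform tightness along the net and keeps the proof at the level of a direct $\varepsilon$--$\lambda$ estimate, which is how the cited references handle it.
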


The following locally $L^{0}-$convex topology is easily seen to be
much stronger than the $(\varepsilon,\lambda)-$topology, and was
first introduced by Filipovi$\acute{c}$, Kupper and Vogelpoth in
\cite{FKV} for random locally convex modules.

\begin{definition}[\cite{FKV}]\label{def:maxtopology} Let $(E,\mathcal{P})$ be a random locally
convex space over $K$ with base $(\Omega,\mathcal{F},P)$. For any
$\mathcal {Q}\in \mathcal{F}(\mathcal{P})$ and $\varepsilon\in
L^{0}_{++}$, let $N_{\theta}(\mathcal {Q},\varepsilon)=\{x\in
E~|~\|x\|_{\mathcal{Q}}\leq \varepsilon\}$. A subset $G$ of $E$ is
called $\mathcal{T}_{c}-$open if for each $x\in G$ there exists some
$N_{\theta}(\mathcal {Q},\varepsilon)$ such that
$x+N_{\theta}(\mathcal {Q},\varepsilon)\subset G$, $\mathcal
{T}_{c}$ denotes the family of $\mathcal{T}_{c}-$open subsets of
$E$. Then it is easy to see that $(E,\mathcal {T}_{c})$ is a
Hausdorff topological group with respect to the addition on $E$.
$\mathcal{T}_{c}$ is called the locally $L^{0}-$convex topology for $E$
induced by $\mathcal{P}$.
\end{definition}

From now on, the locally $L^{0}-$convex topology for each random
locally convex space is always denoted by $\mathcal{T}_{c}$ when no
confusion occurs.

\begin{proposition}[\cite{FKV}]
Let $(E,\cal P)$ be a random locally convex module over $K$ with
base $(\Omega,{\cal F},P)$. Then

$(1)$ $L^{0}({\cal F},K)$ is a topological ring endowed with its
locally $L^{0}-$convex topology;

$(2)$ $E$ is a topological module over the topological ring
$L^{0}({\cal F},K)$ when $E$ and $L^{0}({\cal F},K)$ are endowed
with their respective locally $L^{0}-$convex topologies;

$(3)$ A net $\{x_{\alpha}~|~ \alpha\in\Gamma\}$ in $E$ converges in
the locally $L^{0}-$convex topology to $x\in E$ iff
$\{\|x_{\alpha}-x\|~|~ \alpha\in\Gamma\}$ converges in the locally
$L^{0}-$convex topology of $L^{0}({\cal F},K)$ to $0$ for each
$\|\cdot\|\in\cal P$.
\end{proposition}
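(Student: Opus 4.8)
\noindent\emph{Proof plan.}\ My plan is to read off all three assertions directly from the neighbourhood base $\{N_\theta(\mathcal{Q},\varepsilon)\mid \mathcal{Q}\in\mathcal{F}(\mathcal{P}),\ \varepsilon\in L^0_{++}\}$ of $\theta$ that defines $\mathcal{T}_c$, using only the $L^0$-seminorm identities $\|\xi x\|_{\mathcal{Q}}=|\xi|\,\|x\|_{\mathcal{Q}}$, $\|x+y\|_{\mathcal{Q}}\le\|x\|_{\mathcal{Q}}+\|y\|_{\mathcal{Q}}$, and the already recorded fact that $(E,\mathcal{T}_c)$, and in particular $(L^0(\mathcal{F},K),\mathcal{T}_c)$, is a Hausdorff topological group under addition. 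Since translations are then homeomorphisms, in (1) and (2) addition and the additive inverse come for free, and I am left to handle the ring multiplication, resp.\ the scalar multiplication, which I would verify at an arbitrary point.

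For (1), I would fix $\xi_0,\eta_0\in L^0(\mathcal{F},K)$ and $\varepsilon\in L^0_{++}$ and use the identity $\xi\eta-\xi_0\eta_0=(\xi-\xi_0)(\eta-\eta_0)+(\xi-\xi_0)\eta_0+\xi_0(\eta-\eta_0)$; passing to absolute values it then suffices to produce $\delta\in L^0_{++}$ with $\delta^2+\delta|\eta_0|+|\xi_0|\delta\le\varepsilon$, since then $|\xi-\xi_0|\le\delta$ and $|\eta-\eta_0|\le\delta$ force $|\xi\eta-\xi_0\eta_0|\le\varepsilon$. I expect $\delta=\min\{1,\ \varepsilon\,(3(1+|\xi_0|+|\eta_0|))^{-1}\}$ to work: it lies in $L^0_{++}$ because $1+|\xi_0|+|\eta_0|\ge 1$ has an honest positive inverse and $\varepsilon\in L^0_{++}$, and the clamp $\delta\le 1$ makes $\delta^2\le\delta$, so each of the three summands is dominated by $\varepsilon/3$. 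This inequality is the computational core of the whole proposition.

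Part (2) should go through in exactly the same way one level higher: for fixed $\xi_0\in L^0(\mathcal{F},K)$, $x_0\in E$ and a basic neighbourhood $N_\theta(\mathcal{Q},\varepsilon)$ I would use $\xi x-\xi_0 x_0=(\xi-\xi_0)(x-x_0)+\xi_0(x-x_0)+(\xi-\xi_0)x_0$ and the seminorm identities to get $\|\xi x-\xi_0 x_0\|_{\mathcal{Q}}\le|\xi-\xi_0|\,\|x-x_0\|_{\mathcal{Q}}+|\xi_0|\,\|x-x_0\|_{\mathcal{Q}}+|\xi-\xi_0|\,\|x_0\|_{\mathcal{Q}}$, then keep the \emph{same} finite family $\mathcal{Q}$ on the domain side and take $\delta=\min\{1,\ \varepsilon\,(3(1+|\xi_0|+\|x_0\|_{\mathcal{Q}}))^{-1}\}\in L^0_{++}$, so that $|\xi-\xi_0|\le\delta$ and $x-x_0\in N_\theta(\mathcal{Q},\delta)$ give $\xi x-\xi_0 x_0\in N_\theta(\mathcal{Q},\varepsilon)$. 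For (3), necessity I would get by testing $x_\alpha\to x$ against the neighbourhoods $N_\theta(\{\|\cdot\|\},\varepsilon)$ attached to a single $\|\cdot\|\in\mathcal{P}$, which says precisely that $\|x_\alpha-x\|\le\varepsilon$ eventually, i.e.\ $\|x_\alpha-x\|\to 0$ in the locally $L^0$-convex topology of $L^0(\mathcal{F},K)$; for sufficiency, given $N_\theta(\mathcal{Q},\varepsilon)$ with $\mathcal{Q}=\{\|\cdot\|_1,\dots,\|\cdot\|_n\}$, I would pick for each $i$ an index past which $\|x_\alpha-x\|_i\le\varepsilon$ and use the directedness of $\Gamma$ to find a common upper bound, beyond which $\|x_\alpha-x\|_{\mathcal{Q}}=\bigvee_{i}\|x_\alpha-x\|_i\le\varepsilon$.

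The main obstacle --- and the only reason the statement is not completely mechanical --- is that $\mathcal{T}_c$ is genuinely not a linear topology: $L^0(\mathcal{F},K)$ is only a topological ring, not a topological field, so one cannot absorb neighbourhoods by shrinking ordinary real or complex scalars. The point that makes everything go instead is the order structure of $L^0$: $L^0_{++}$ is a sublattice closed under products and finite infima, and $\varepsilon\,(1+\zeta)^{-1}\in L^0_{++}$ for every $\zeta\in L^0_+$ whenever $\varepsilon\in L^0_{++}$. That is exactly what allows the $\min\{1,\cdot\}$-clamped radius $\delta$ to do the work of an $\varepsilon/3$-estimate pointwise a.s., and I expect it to be the only place in the proof where any real care is needed.
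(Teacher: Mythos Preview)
Your proof is correct. Note, however, that the paper does not give its own proof of this proposition: it is stated as a quotation from \cite{FKV} and used without argument, so there is nothing in the paper to compare your write-up against. The argument you outline --- the three-term splitting of $\xi\eta-\xi_0\eta_0$ (resp.\ $\xi x-\xi_0 x_0$), the clamped radius $\delta=\min\{1,\ \varepsilon(3(1+\cdots))^{-1}\}\in L^0_{++}$, and the finite-supremum step for (3) --- is exactly the standard verification one carries out for topologies defined by a family of $L^0$-seminorms, and it is essentially how the result is established in \cite{FKV}. Your remark that the only nontrivial point is the availability of \emph{strictly positive} $L^0$-radii $\varepsilon(1+\zeta)^{-1}$, compensating for the failure of $\mathcal{T}_c$ to be a linear topology, is the right diagnosis.
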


${\cal T}_{c}$ is called locally $L^{0}-$convex because it has a
striking local base ${\cal
U}_{\theta}=\{B_{\mathcal{Q}}(\varepsilon)~|~\mathcal{Q}\subset
{\cal P}$ finite and $\varepsilon\in L^{0}_{++}\}$, each member $U$
of which is:

\hspace{2mm}(\lowercase \expandafter {\romannumeral 1})
$L^{0}-$convex: $\xi\cdot x+(1-\xi)\cdot y\in U$ for any $x,y\in U$
and $\xi\in L^{0}_+$ such that $0\leqslant\xi\leqslant1$;

\hspace{1mm}(\lowercase \expandafter {\romannumeral 2})
$L^{0}-$absorbent: there is $\xi\in L^{0}_{++}$ for each $x\in E$
such that $x\in\xi\cdot U$;

(\lowercase \expandafter {\romannumeral 3}) $L^{0}-$balanced:
$\xi\cdot x\in U$ for any $x\in U$ and any $\xi\in L^{0}({\cal
F},K)$ such that $|\xi|\leqslant1$.

\begin{remark} Let $(E,\mathcal {P})$ be a random
locally convex module over $K$ with base $(\Omega,\mathcal {F}, P)$
endowed with the locally $L^{0}-$convex topology $\mathcal {T}_{c}$.
Although $E$ is a linear space, $(E,{\cal T}_c)$
may not be a topological linear space since the scalar
multiplication is not necessarily
continuous, see \cite{FKV} for details.
\end{remark}

Historically, the earliest two notions of a random conjugate space
of a random locally convex space were introduced in
\cite{Guo-Survey} and \cite{Guo-Module}, respectively. As shown
in \cite{Guo-Relations between, Guo-progress}, it turned out
that they just correspond to the $(\varepsilon,\lambda)-$topology
and the locally $L^{0}-$convex topology in the context of a random
locally convex module, respectively!

\begin{definition}[\cite{Guo-Module}]\label{def:homomorphismI} Let
$(E,\mathcal{P})$ be a random locally convex space over $K$ with
base $(\Omega,\mathcal{F},P)$. A random linear functional
$f:E\rightarrow L^{0}(\mathcal{F},K)$ is called an a.s. bounded
random linear functional of type I if there are some $\xi\in
L^{0}_{+}$ and $\mathcal{Q}\in \mathcal{F}(\mathcal{P})$ such that
$|f(x)|\leq\xi\cdot\|x\|_{\mathcal{Q}},\forall x\in E$. Denote by
$E^{\ast}_{I}$ the set of a.s. bounded random linear functional of
type I on $E$. The module multiplication operation
$\cdot:L^{0}(\mathcal{F},K)\times E^{\ast}_{I}\rightarrow
E^{\ast}_{I}$ is defined by $(\xi f)(x)=\xi(f(x)),\forall \xi\in
L^{0}(\mathcal{F},K),f\in E^{\ast}_{I}$ and $x\in E$. It is easy to
see that $E^{\ast}_{I}$ is a left module over
$L^{0}(\mathcal{F},K)$, called the random conjugate space of type I
of $E$.
\end{definition}

\begin{definition}[\cite{Guo-Survey,G-C}]\label{def:homomorphismII} Let
$(E,\mathcal{P})$ be a random locally convex space over $K$ with
base $(\Omega,\mathcal{F},P)$. A random linear functional
$f:E\rightarrow L^{0}(\mathcal{F},K)$ is called an a.s. bounded
random linear functional of type II on $E$ if there exist a
countable partition $\{A_{i}~|~i\in N\}$ of $\Omega$ to
$\mathcal{F}$, a sequence $\{\xi_{i}~|~i\in N\}$ in $L^{0}_{+}$ and
a sequence $\{\mathcal {Q}_{i}~|~i\in N\}$ in
$\mathcal{F}(\mathcal{P})$ such that
$|f(x)|\leq\Sigma_{i=1}^{\infty}\tilde{I}_{A_{i}}\cdot\xi_{i}\cdot
\|x\|_{\mathcal{Q}_{i}},\forall x\in E$. Denote by
$E^{\ast}_{II}$ the $L^{0}(\mathcal{F},K)-$module
of a.s. bounded random linear functional of type II on $E$, called
the random conjugate space of type II of $E$.
\end{definition}

Propositions 2.12 and 2.13 below give the
topological characterizations of an element in $E^{\ast}_{I}$ and $E^{\ast}_{II}$, respectively.

\begin{proposition}[\cite{Guo-Relations
between,Guo-progress,Guo-Survey}]\label{prop:continuousII} Let $(E,{\cal P})$ be a
random locally convex module over $K$ with base $(\Omega,{\cal
F},P)$ and $f:E\rightarrow L^{0}({\cal F},K)$ a random linear
functional. Then $f\in E^{\ast}_I$ iff $f$ is a continuous module
homomorphism from $(E,{\cal T}_c)$ to $(L^{0}({\cal F},K),{\cal
T}_c)$, namely $E^{\ast}_I=E^{\ast}_c$.
\end{proposition}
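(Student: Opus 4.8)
The plan is to prove the two implications of the equivalence $f\in E^{\ast}_I \iff f\in E^{\ast}_c$ separately, the harder direction being that continuity with respect to $\mathcal{T}_c$ forces the existence of a single pair $(\xi,\mathcal{Q})$ dominating $|f|$. For the easy direction, suppose $f\in E^{\ast}_I$, so $|f(x)|\leq \xi\cdot\|x\|_{\mathcal{Q}}$ for some $\xi\in L^0_+$ and $\mathcal{Q}\in\mathcal{F}(\mathcal{P})$. To see that $f$ is $\mathcal{T}_c$-continuous as a module homomorphism, it suffices (by translation invariance, since $(E,\mathcal{T}_c)$ is a topological group and $f$ is additive) to check continuity at $\theta$. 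Given a basic neighbourhood $\{\eta\in L^0(\mathcal{F},K):|\eta|\leq\varepsilon\}$ of $0$ in $(L^0(\mathcal{F},K),\mathcal{T}_c)$ with $\varepsilon\in L^0_{++}$, take $\varepsilon' = \varepsilon\cdot Q(\xi\vee 1)\in L^0_{++}$ (using the generalized inverse) and observe that if $\|x\|_{\mathcal{Q}}\leq\varepsilon'$ then $|f(x)|\leq\xi\cdot\varepsilon'\leq (\xi\vee 1)\cdot\varepsilon\cdot Q(\xi\vee 1)\leq\varepsilon$; so $f(N_\theta(\mathcal{Q},\varepsilon'))\subset\{|\eta|\leq\varepsilon\}$, giving continuity.

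For the converse, assume $f$ is a continuous module homomorphism from $(E,\mathcal{T}_c)$ to $(L^0(\mathcal{F},K),\mathcal{T}_c)$. By continuity at $\theta$ applied to the unit ball $\{\eta:|\eta|\leq 1\}$, there exist $\mathcal{Q}\in\mathcal{F}(\mathcal{P})$ and $\varepsilon\in L^0_{++}$ with $\|x\|_{\mathcal{Q}}\leq\varepsilon \implies |f(x)|\leq 1$. The aim is to upgrade this to the homogeneous bound $|f(x)|\leq Q(\varepsilon)\cdot\|x\|_{\mathcal{Q}}$ for all $x$. Fix $x\in E$ and let $\eta = \|x\|_{\mathcal{Q}}\in L^0_+$. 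One decomposes $\Omega$ according to where $\eta$ vanishes: on the set $A=[\eta = 0]$ one shows $\tilde I_A f(x)=\tilde I_A f(\tilde I_A x)=0$, because $\|\tilde I_A x\|_{\mathcal{Q}} = \tilde I_A\eta = 0\leq\varepsilon$ forces $|f(\tilde I_A x)|\leq 1$, and then replacing $x$ by $n\tilde I_A x$ and using $L^0$-linearity of $f$ and of $\|\cdot\|_{\mathcal{Q}}$ gives $|\tilde I_A f(x)| = |f(n\tilde I_A x)|/n \le 1/n \to 0$. On $A^c$, set $\zeta = \varepsilon\cdot Q(\eta)\cdot\tilde I_{A^c}\in L^0_{++}$ (well defined since $\eta>0$ on $A^c$) and consider $y=\zeta\cdot x$; then $\|y\|_{\mathcal{Q}} = \zeta\cdot\eta = \varepsilon\cdot\tilde I_{A^c}\leq\varepsilon$, so $|f(y)|\leq 1$, i.e. $|\zeta|\cdot|f(x)|\leq 1$ on $A^c$, whence $|f(x)|\leq Q(\zeta) = Q(\varepsilon)\cdot\eta\cdot\tilde I_{A^c}$ on $A^c$. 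Combining the two pieces yields $|f(x)|\leq Q(\varepsilon\vee 1)\cdot\|x\|_{\mathcal{Q}}$ globally, so $f\in E^{\ast}_I$ with $\xi=Q(\varepsilon\vee 1)$.

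The main obstacle is the converse direction, and specifically the need to exploit $L^0$-homogeneity together with the $L^0$-balancedness of the basic neighbourhoods $N_\theta(\mathcal{Q},\varepsilon)$: the continuity hypothesis only gives a ``local'' bound $|f(x)|\leq 1$ on the set where $\|x\|_{\mathcal{Q}}\leq\varepsilon$, and turning this into the scale-invariant pointwise inequality requires the rescaling trick above, handling carefully the measurable set on which $\|x\|_{\mathcal{Q}}$ vanishes (so that the generalized inverse $Q(\cdot)$ behaves correctly) and using that both $f$ and $\|\cdot\|_{\mathcal{Q}}$ are $L^0(\mathcal{F},K)$-homogeneous to patch the estimates on $A$ and $A^c$ together via $\tilde I_A + \tilde I_{A^c} = 1$. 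Once the dominating pair $(\xi,\mathcal{Q})$ is produced, membership in $E^{\ast}_I$ is immediate from Definition \ref{def:homomorphismI}, and the identification $E^{\ast}_I = E^{\ast}_c$ follows.
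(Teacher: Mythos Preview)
The paper does not supply its own proof of this proposition: it is stated with attribution to \cite{Guo-Relations between,Guo-progress,Guo-Survey} and used as a known fact, so there is nothing in the paper to compare your argument against. Your approach is the standard one and is essentially correct, but two points deserve attention.

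First, in the forward direction you silently assume that $f$ is already an $L^{0}$-module homomorphism. By Definition~\ref{def:homomorphismI}, membership in $E^{\ast}_I$ only requires $f$ to be a \emph{random linear functional} (that is, $K$-linear) satisfying $|f(x)|\le\xi\cdot\|x\|_{\mathcal{Q}}$; the proposition asserts in particular that such an $f$ is automatically $L^{0}$-linear. This step is not hard (use the bound together with the $L^{0}$-seminorm property to get $f(\tilde I_A x)=\tilde I_A f(x)$ for $A\in\mathcal{F}$, extend to simple $\eta$ by $K$-linearity, then pass to general $\eta\in L^{0}$ by approximation and the bound), but it should be stated rather than assumed.

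Second, in the converse direction there are two small slips. The element $\zeta=\varepsilon\cdot Q(\eta)\cdot\tilde I_{A^c}$ is not in $L^{0}_{++}$ (it vanishes on $A$); this does not affect the argument, since you only use $\zeta$ through $\|\zeta x\|_{\mathcal{Q}}\le\varepsilon$ and the module homomorphism property, but the claim $\zeta\in L^{0}_{++}$ should be dropped. More importantly, your final constant $\xi=Q(\varepsilon\vee 1)$ is too small: on the set where $\varepsilon<1$ one has $Q(\varepsilon\vee 1)=1<1/\varepsilon$, whereas the estimate you actually derived on $A^c$ is $|f(x)|\le Q(\varepsilon)\cdot\|x\|_{\mathcal{Q}}$. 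The correct choice is simply $\xi=Q(\varepsilon)\in L^{0}_{+}$, which is legitimate since $\varepsilon\in L^{0}_{++}$.
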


\begin{proposition}[\cite{Guo-Relations
between, Guo-Zhu}]\label{prop:continuousI} Let $(E,{\cal P})$ be a random
locally convex module over $K$ with base $(\Omega,{\cal F},P)$ and
$f:E\rightarrow L^{0}({\cal F},K)$ a random linear functional. Then
$f\in E^{\ast}_{II}$ iff $f$ is a continuous module homomorphism
from $(E,{\cal T}_{\epsilon,\lambda})$ to $(L^{0}({\cal F},K),{\cal
T}_{\epsilon,\lambda})$, namely
$E^{\ast}_{II}=E^{\ast}_{\varepsilon,\lambda}$.
\end{proposition}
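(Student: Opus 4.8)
\medskip

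\noindent\textbf{Proof proposal.} I would prove the two inclusions $E^{\ast}_{II}\subseteq E^{\ast}_{\varepsilon,\lambda}$ and $E^{\ast}_{\varepsilon,\lambda}\subseteq E^{\ast}_{II}$ separately, writing $N_{0}(\varepsilon,\lambda):=\{\zeta\in L^{0}(\mathcal{F},K):P\{|\zeta|<\varepsilon\}>1-\lambda\}$ for the basic $\mathcal{T}_{\varepsilon,\lambda}$-neighbourhood of $0$ in $L^{0}(\mathcal{F},K)$. For $E^{\ast}_{II}\subseteq E^{\ast}_{\varepsilon,\lambda}$, suppose $|f(x)|\le\sum_{i\ge1}\tilde{I}_{A_{i}}\xi_{i}\|x\|_{\mathcal{Q}_{i}}$ for all $x\in E$. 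Since $\|\tilde{I}_{A}x\|_{\mathcal{Q}_{i}}=\tilde{I}_{A}\|x\|_{\mathcal{Q}_{i}}$, the bound forces $f(\tilde{I}_{A}x)$ to be supported on $A$ and $f(\tilde{I}_{A^{c}}x)$ on $A^{c}$, so $f(\tilde{I}_{A}x)+f(\tilde{I}_{A^{c}}x)=f(x)$ yields $f(\tilde{I}_{A}x)=\tilde{I}_{A}f(x)$ for every $A\in\mathcal{F}$. For continuity of $f$ at $\theta$, given $\varepsilon>0$ and $\lambda\in(0,1)$ I would pick $N$ with $P(\bigcup_{i>N}A_{i})<\lambda/3$, set $\mathcal{Q}=\bigcup_{i\le N}\mathcal{Q}_{i}\in\mathcal{F}(\mathcal{P})$ and $\xi=\bigvee_{i\le N}\xi_{i}\in L^{0}_{+}$, pick $M>0$ with $P\{\xi>M\}<\lambda/3$, and check that $x\in N_{\theta}(\mathcal{Q},\varepsilon/(2M),\lambda/3)$ forces $|f(x)|<\varepsilon$ on a set of probability exceeding $1-\lambda$; additivity then gives continuity everywhere. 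Finally, approximating an arbitrary $\xi\in L^{0}(\mathcal{F},K)$ by $\mathcal{F}$-simple $\xi_{m}$ with $|\xi_{m}-\xi|\to0$ a.s. and letting $m\to\infty$ in $f(\xi_{m}x)=\xi_{m}f(x)$ (valid for simple $\xi_{m}$ by $K$-linearity and the identity $f(\tilde{I}_{C}x)=\tilde{I}_{C}f(x)$, using $\xi_{m}x\to\xi x$ in $\mathcal{T}_{\varepsilon,\lambda}$ and continuity of $f$) upgrades this to $f(\xi x)=\xi f(x)$, so $f$ is a module homomorphism; hence $f\in E^{\ast}_{\varepsilon,\lambda}$.

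For $E^{\ast}_{\varepsilon,\lambda}\subseteq E^{\ast}_{II}$, let $f$ be a $\mathcal{T}_{\varepsilon,\lambda}$-continuous module homomorphism. For each $\mathcal{Q}\in\mathcal{F}(\mathcal{P})$ I would set $\|f\|_{\mathcal{Q}}:=\bigvee\{|f(x)|:x\in E,\ \|x\|_{\mathcal{Q}}\le1\}\in\bar{L}^{0}_{+}$ and $A_{\mathcal{Q}}:=\{\|f\|_{\mathcal{Q}}<+\infty\}$. Because $f$ is a module homomorphism, the family $\{|f(x)|:\|x\|_{\mathcal{Q}}\le1\}$ is directed upward (for $x,y$ in it, $\tilde{I}_{B}x+\tilde{I}_{B^{c}}y$ with $B=\{|f(x)|\ge|f(y)|\}$ again lies in it and realizes $|f(x)|\vee|f(y)|$), so Proposition~\ref{prop:directe} produces a nondecreasing sequence $|f(x_{k})|\uparrow\|f\|_{\mathcal{Q}}$ with $\|x_{k}\|_{\mathcal{Q}}\le1$. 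Normalizing with the generalized inverse (replace $x$ by $Q(\|x\|_{\mathcal{Q}})\cdot x$, noting that continuity forces $f(x)=0$ whenever $\|x\|_{\mathcal{Q}}=0$) yields, for all $x$,
\[
\tilde{I}_{A_{\mathcal{Q}}}\,|f(x)|\ \le\ \bigl(\tilde{I}_{A_{\mathcal{Q}}}\,\|f\|_{\mathcal{Q}}\bigr)\,\|x\|_{\mathcal{Q}},
\]
i.e. $f$ obeys an $E^{\ast}_{I}$-type bound on $A_{\mathcal{Q}}$.

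The step I expect to be the main obstacle is to show that the sets $A_{\mathcal{Q}}$ cover $\Omega$, i.e. to pass from the ``in probability'' continuity estimate to ``almost sure'' finiteness of some $\|f\|_{\mathcal{Q}}$ off an arbitrarily small set. By continuity of $f$ at $\theta$, for each $\lambda\in(0,1)$ there exist $\mathcal{Q}(\lambda)\in\mathcal{F}(\mathcal{P})$, $\delta(\lambda)>0$, $\mu(\lambda)\in(0,1)$ with $N_{\theta}(\mathcal{Q}(\lambda),\delta(\lambda),\mu(\lambda))\subseteq f^{-1}(N_{0}(1,\lambda))$. I claim $P(A_{\mathcal{Q}(\lambda)}^{c})\le2\lambda$: otherwise, on $D:=\{\|f\|_{\mathcal{Q}(\lambda)}=+\infty\}$ with $P(D)>2\lambda$ the nondecreasing sequence above satisfies $|f(x_{k})|\uparrow+\infty$ a.s., so some $x^{\ast}=x_{k}$ has $\|x^{\ast}\|_{\mathcal{Q}(\lambda)}\le1$ and $P(\{|f(x^{\ast})|>4/\delta(\lambda)\}\cap D)>\lambda$; then $z:=\tfrac{1}{2}\,\delta(\lambda)\,x^{\ast}$ lies in $N_{\theta}(\mathcal{Q}(\lambda),\delta(\lambda),\mu(\lambda))$ (since $\|z\|_{\mathcal{Q}(\lambda)}\le\tfrac{1}{2}\delta(\lambda)<\delta(\lambda)$) while $|f(z)|>2$ on a set of probability $>\lambda$, contradicting $P\{|f(z)|\ge1\}\le\lambda$. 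Taking $\lambda=2^{-n}$ and $\mathcal{Q}_{n}:=\mathcal{Q}(2^{-n})$ gives $P(A_{\mathcal{Q}_{n}}^{c})\le2^{1-n}$, hence $\bigcup_{n}A_{\mathcal{Q}_{n}}=\Omega$ a.s. Finally, disjointify into a countable partition $B_{1}=A_{\mathcal{Q}_{1}}$, $B_{n}=A_{\mathcal{Q}_{n}}\setminus\bigcup_{j<n}A_{\mathcal{Q}_{j}}$ $(n\ge2)$, put $\xi_{n}:=\tilde{I}_{A_{\mathcal{Q}_{n}}}\,\|f\|_{\mathcal{Q}_{n}}\in L^{0}_{+}$, and sum the displayed bounds over $n$ (using $B_{n}\subseteq A_{\mathcal{Q}_{n}}$) to obtain $|f(x)|\le\sum_{n}\tilde{I}_{B_{n}}\xi_{n}\|x\|_{\mathcal{Q}_{n}}$ for all $x$; hence $f\in E^{\ast}_{II}$, so that $E^{\ast}_{II}=E^{\ast}_{\varepsilon,\lambda}$.
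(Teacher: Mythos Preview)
The paper does not prove Proposition~\ref{prop:continuousI}; it is quoted as a known result from \cite{Guo-Relations between,Guo-Zhu}, so there is no in-paper proof to compare against. That said, your argument is essentially the right one and is very much in the spirit of the techniques the paper does spell out: your key step for $E^{\ast}_{\varepsilon,\lambda}\subseteq E^{\ast}_{II}$---forming $\|f\|_{\mathcal{Q}}=\bigvee\{|f(x)|:\|x\|_{\mathcal{Q}}\le1\}$, showing $P(\{\|f\|_{\mathcal{Q}(\lambda)}=\infty\})\le 2\lambda$ via a directedness/Proposition~\ref{prop:directe} argument, and then disjointifying---is exactly the machinery behind Lemma~\ref{lem:[8]} (also cited from \cite{Guo-Zhu}) and the partition construction in the paper's proof of Theorem~\ref{thm:maxel}.

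One small correction: your parenthetical ``continuity forces $f(x)=0$ whenever $\|x\|_{\mathcal{Q}}=0$'' is not true as stated---$\mathcal{T}_{\varepsilon,\lambda}$-continuity alone cannot give this, since $\mathcal{P}$ may contain other seminorms that see $x$. What is true (and all you need) is that on $A_{\mathcal{Q}}\cap[\|x\|_{\mathcal{Q}}=0]$ one has $|f(x)|=0$: indeed $z_{n}:=n\,\tilde{I}_{[\|x\|_{\mathcal{Q}}=0]}\,x$ lies in $B_{\mathcal{Q}}(1)$ for every $n$, so $\|f\|_{\mathcal{Q}}\ge n\,\tilde{I}_{[\|x\|_{\mathcal{Q}}=0]}\,|f(x)|$, whence $[\|x\|_{\mathcal{Q}}=0]\cap[|f(x)|>0]\subset A_{\mathcal{Q}}^{c}$. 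This is precisely the content of Lemma~\ref{lem:[8]} restricted to $A_{\mathcal{Q}}$; once you replace the faulty justification by this scaling observation, your displayed bound $\tilde{I}_{A_{\mathcal{Q}}}|f(x)|\le(\tilde{I}_{A_{\mathcal{Q}}}\|f\|_{\mathcal{Q}})\|x\|_{\mathcal{Q}}$ and the rest of the proof go through.
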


\begin{definition}[\cite{Guo-Relations
between, FKV}] Let $(E,{\cal P})$ be a random
locally convex module over $K$ with base $(\Omega,{\cal F},P)$.
${\cal P}$ is called having the countable concatenation property if
$\sum_{n=1}^{\infty}\tilde{I}_{A_n} \|\cdot\|_{\mathcal {Q}_n}$
still belongs to ${\cal P}$ for any countable partition
$\{A_n\,|\,n\in N\}$ of $\Omega$ to ${\cal F}$ and any sequence
$\{\mathcal {Q}_{n}\,|\,n\in N\}$ in $\mathcal{F}(\mathcal{P})$.
\end{definition}

\begin{proposition}[\cite{Guo-Relations between}] Let
$(E,\mathcal{P})$ be a random locally convex module. Then
$E^{\ast}_{\varepsilon,\lambda}=E^{\ast}_{c}$ if $\mathcal{P}$ has
the countable concatenation property $($generally, it is obvious
that $E^{\ast}_{c}\subset E^{\ast}_{\varepsilon,\lambda}$$)$. In
particular $E^{\ast}_{\varepsilon,\lambda}=E^{\ast}_{c}$ for any
$RN$ module $(E,\|\cdot\|)$.
\end{proposition}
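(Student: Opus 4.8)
The plan is to work entirely with the explicit pointwise estimates defining $E^{\ast}_{I}$ and $E^{\ast}_{II}$, using Propositions \ref{prop:continuousII} and \ref{prop:continuousI} to rewrite the assertion as $E^{\ast}_{II}=E^{\ast}_{I}$. First I would dispose of the easy inclusion: the type~I estimate $|f(x)|\le\xi\cdot\|x\|_{\mathcal{Q}}$ is exactly the type~II estimate associated with the trivial partition $A_{1}=\Omega$, $A_{i}=\emptyset$ for $i\ge 2$, together with $\xi_{1}=\xi$ and $\mathcal{Q}_{1}=\mathcal{Q}$; hence $E^{\ast}_{I}\subset E^{\ast}_{II}$, that is, $E^{\ast}_{c}\subset E^{\ast}_{\varepsilon,\lambda}$, with no hypothesis on $\mathcal{P}$. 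It then remains to establish the reverse inclusion under the countable concatenation property.

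So suppose $\mathcal{P}$ has the countable concatenation property and let $f\in E^{\ast}_{\varepsilon,\lambda}=E^{\ast}_{II}$; fix a countable partition $\{A_{i}\,|\,i\in N\}$ of $\Omega$ in $\mathcal{F}$, a sequence $\{\xi_{i}\,|\,i\in N\}$ in $L^{0}_{+}$ and a sequence $\{\mathcal{Q}_{i}\,|\,i\in N\}$ in $\mathcal{F}(\mathcal{P})$ with
$$|f(x)|\le\sum_{i=1}^{\infty}\tilde{I}_{A_{i}}\cdot\xi_{i}\cdot\|x\|_{\mathcal{Q}_{i}},\qquad\forall\,x\in E.$$
I would then set $\|\cdot\|_{0}=\sum_{i=1}^{\infty}\tilde{I}_{A_{i}}\|\cdot\|_{\mathcal{Q}_{i}}$ and $\xi=\sum_{i=1}^{\infty}\tilde{I}_{A_{i}}\xi_{i}$. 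Since the $A_{i}$ are pairwise disjoint and cover $\Omega$ and each $\xi_{i}$ is a.s.\ finite, the series defining $\xi$ converges a.s.\ to a genuine element of $L^{0}_{+}$; and by the countable concatenation property $\|\cdot\|_{0}\in\mathcal{P}$, so $\mathcal{Q}_{0}:=\{\|\cdot\|_{0}\}\in\mathcal{F}(\mathcal{P})$ with $\|x\|_{\mathcal{Q}_{0}}=\|x\|_{0}$ for every $x$. Using $\tilde{I}_{A_{i}}\tilde{I}_{A_{j}}=0$ for $i\neq j$ one gets $\xi\cdot\|x\|_{0}=\sum_{i=1}^{\infty}\tilde{I}_{A_{i}}\xi_{i}\|x\|_{\mathcal{Q}_{i}}\ge|f(x)|$ for all $x\in E$, whence $f\in E^{\ast}_{I}=E^{\ast}_{c}$. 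Combined with the first paragraph this yields $E^{\ast}_{\varepsilon,\lambda}=E^{\ast}_{c}$.

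For the final clause I would observe that for an $RN$ module $(E,\|\cdot\|)$ the family $\mathcal{P}=\{\|\cdot\|\}$ is a singleton, so its only nonempty finite subfamily is $\{\|\cdot\|\}$ itself and $\sum_{i}\tilde{I}_{A_{i}}\|\cdot\|=\|\cdot\|$ for every countable partition $\{A_{i}\}$ of $\Omega$; thus $\{\|\cdot\|\}$ has the countable concatenation property trivially and the previous paragraph applies. Equivalently, one can argue directly: any $f\in E^{\ast}_{II}$ satisfies $|f(x)|\le\sum_{i}\tilde{I}_{A_{i}}\xi_{i}\|x\|=\bigl(\sum_{i}\tilde{I}_{A_{i}}\xi_{i}\bigr)\|x\|$, so $f\in E^{\ast}_{I}$.

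All the manipulations here are routine; the single step that carries real weight is the appeal to the countable concatenation property to guarantee $\|\cdot\|_{0}\in\mathcal{P}$ — without it $\|\cdot\|_{0}$ is only a countable concatenation of members of $\mathcal{F}(\mathcal{P})$ and may fail to lie in $\mathcal{P}$, which is exactly why the inclusion $E^{\ast}_{c}\subset E^{\ast}_{\varepsilon,\lambda}$ can be strict in general. A minor secondary point needing a line of care is that $\xi=\sum_{i}\tilde{I}_{A_{i}}\xi_{i}$ is a.s.\ finite, which relies on $\{A_{i}\}$ being a partition of $\Omega$.
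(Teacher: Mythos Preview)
The paper does not give its own proof of this proposition; it is stated in Section~2 as a preliminary result imported from \cite{Guo-Relations between}, so there is nothing in the present text to compare your argument against. Your proof is correct and is the natural one: reducing via Propositions~\ref{prop:continuousII} and~\ref{prop:continuousI} to the equivalent assertion $E^{\ast}_{II}=E^{\ast}_{I}$, and then absorbing the type~II data into a single $\xi=\sum_{i}\tilde{I}_{A_{i}}\xi_{i}\in L^{0}_{+}$ and a single seminorm $\|\cdot\|_{0}=\sum_{i}\tilde{I}_{A_{i}}\|\cdot\|_{\mathcal{Q}_{i}}\in\mathcal{P}$ (the latter membership being exactly what the countable concatenation hypothesis provides), has no gaps.
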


\section{Proofs of the Main Results}

\begin{lemma}[\cite{Guo-Zhu}]\label{lem:[8]} Suppose $E$ is a
left module over the algebra $L^{0}(\mathcal{F},K)$, $f:E\rightarrow
L^{0}(\mathcal{F},K)$ and $\|\cdot\|:E\rightarrow L^{0}_{+}$ are
such that $|f(\xi\cdot x)|=\xi\cdot|f(x)|$ and $\|\xi\cdot
x\|=\xi\cdot\|x\|, \forall \xi\in L^{0}_{+}$ and $x\in E$. Denote
$B_{\|\cdot\|}(1)=\{x\in E~|~\|x\|\leq 1\}$, then there exists
$\eta\in L^{0}_{+}$ such that $|f(x)|\leq \eta\cdot\|x\|,\forall
x\in E$ iff $\bigvee\{|f(x)|~|~x\in B_{\|\cdot\|}(1)\}\in
L^{0}_{+}$.
\end{lemma}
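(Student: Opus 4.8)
The plan is to prove the equivalence by treating the two implications separately; the ``only if'' direction is a one-line observation, while all the substance lies in the ``if'' direction. For ``only if'', suppose there is $\eta_{0}\in L^{0}_{+}$ with $|f(x)|\leq\eta_{0}\cdot\|x\|$ for all $x\in E$. If $x\in B_{\|\cdot\|}(1)$ then $\|x\|\leq1$, so $|f(x)|\leq\eta_{0}\cdot\|x\|\leq\eta_{0}$; thus $\eta_{0}$ is an upper bound in $\bar{L}^{0}(\mathcal{F},R)$ for $\{|f(x)|~|~x\in B_{\|\cdot\|}(1)\}$. Since $\theta=0\cdot x\in B_{\|\cdot\|}(1)$ with $|f(\theta)|=0$ the set is nonempty with nonnegative entries, and since $L^{0}(\mathcal{F},R)$ is a complete lattice in the sense that every subset with an upper bound has a supremum in $L^{0}(\mathcal{F},R)$, we conclude $\bigvee\{|f(x)|~|~x\in B_{\|\cdot\|}(1)\}\in L^{0}_{+}$.

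For ``if'', put $\eta:=\bigvee\{|f(x)|~|~x\in B_{\|\cdot\|}(1)\}$, which by hypothesis belongs to $L^{0}_{+}$ and is in particular a.s.\ finite (and, by Proposition \ref{prop:directe}, realized as a countable supremum). Fix $x\in E$, choose a representative $\|x\|^{0}$ of $\|x\|$, and set $A=[\|x\|>0]$. On $A$ I would normalize using the generalized inverse: let $\xi\in L^{0}_{+}$ be the class of the function equal to $1/\|x\|^{0}(\omega)$ on $A$ and $0$ on $A^{c}$, i.e.\ $\xi=Q(\|x\|)\cdot\tilde{I}_{A}$. Then $\|\xi\cdot x\|=\xi\cdot\|x\|=\tilde{I}_{A}\leq1$, so $\xi\cdot x\in B_{\|\cdot\|}(1)$ and hence $\xi\cdot|f(x)|=|f(\xi\cdot x)|\leq\eta$. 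Multiplying this inequality by $\|x\|\in L^{0}_{+}$ and using $\|x\|\cdot\xi=\tilde{I}_{A}$ gives $\tilde{I}_{A}\cdot|f(x)|\leq\eta\cdot\|x\|$.

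It remains to show that $f(x)$ vanishes on $A^{c}=[\|x\|=0]$, i.e.\ $\tilde{I}_{A^{c}}\cdot|f(x)|=0$. For this I would exploit positive homogeneity with the scalars $n\tilde{I}_{A^{c}}$, $n\in N$: since $\|x\|\cdot\tilde{I}_{A^{c}}=0$ we have $\|n\tilde{I}_{A^{c}}\cdot x\|=n\tilde{I}_{A^{c}}\cdot\|x\|=0\leq1$, so $n\tilde{I}_{A^{c}}\cdot x\in B_{\|\cdot\|}(1)$ and therefore $n\tilde{I}_{A^{c}}\cdot|f(x)|=|f(n\tilde{I}_{A^{c}}\cdot x)|\leq\eta$ for every $n$. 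As $\eta$ is a.s.\ finite, letting $n\to\infty$ forces $\tilde{I}_{A^{c}}\cdot|f(x)|=0$. Combining this with the previous paragraph and $\|x\|\cdot\tilde{I}_{A^{c}}=0$ yields $|f(x)|=\tilde{I}_{A}\cdot|f(x)|+\tilde{I}_{A^{c}}\cdot|f(x)|=\tilde{I}_{A}\cdot|f(x)|\leq\eta\cdot\|x\|$, which is exactly the required inequality, with $\eta$ serving as the desired element of $L^{0}_{+}$.

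The main obstacle I anticipate is the handling of the set $[\|x\|=0]$ — proving that $f$ vanishes there — together with the associated care needed when passing between equivalence classes and representatives (in defining $\xi$ via the generalized inverse and in the limiting argument on $A^{c}$), so that all identities hold on the complement of a single $P$-null set. Once this local bookkeeping is done, the proof reduces to the two stated homogeneity identities, the definition of $\eta$ as a supremum, and the a.s.\ finiteness of elements of $L^{0}_{+}$.
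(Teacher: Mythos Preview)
Your proof is correct. Note, however, that the paper does not supply its own proof of this lemma: it is quoted from \cite{Guo-Zhu} and used as a black box in the proof of Theorem~\ref{thm:maxel}, so there is no in-paper argument to compare against. That said, your argument is exactly the standard one for this type of statement and is almost certainly what the cited reference does: the ``only if'' direction is the trivial bounding $|f(x)|\leq\eta_{0}\|x\|\leq\eta_{0}$ on the unit ball, and the ``if'' direction proceeds by normalizing via the generalized inverse $Q(\|x\|)$ on $[\|x\|>0]$ and then showing $f$ vanishes on $[\|x\|=0]$ by the scaling trick with $n\tilde I_{A^{c}}$. One cosmetic remark: since $Q(\|x\|)$ is by definition already $0$ on $[\|x\|=0]$, the factor $\tilde I_{A}$ in $\xi=Q(\|x\|)\cdot\tilde I_{A}$ is redundant (though harmless).
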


We can now prove Theorem \ref{thm:maxel}.

\noindent {\bf Proof of Theorem \ref{thm:maxel}.}\quad  Since
$(L^{0}(\mathcal{F},K),|\cdot|)$ is endowed with the
$(\varepsilon,\lambda)-$topology,
$\{V_{0}(\varepsilon,\lambda)~|~\varepsilon$ and $\lambda$ are real
numbers, $\varepsilon>0$ and $0<\lambda< 1\}$ is a local base at $0$
of the $(\varepsilon,\lambda)-$topology for $L^{0}(\mathcal{F},K)$,
where $V_{0}(\varepsilon,\lambda)=\{\xi\in
L^{0}(\mathcal{F},K)~|~P(\{\omega\in\Omega~|~|\xi|(\omega)<\varepsilon\})>1-\lambda\}$.
Select $\{\varepsilon_{n}~|~n\in N\}$ and $\{\lambda_{n}~|~n\in N\}$
to be any two sequences of positive numbers which both tend to 0 in
a decreasing way and $\lambda_{n}<1$ for each $n\in N$, then clearly
$\{V_{0}(\varepsilon_{n},\lambda_{n})~|~n\in N\}$ is also a local
base at $0$ of the $(\varepsilon,\lambda)-$topology for
$L^{0}(\mathcal{F},K)$.

Since $f:(E,{\cal T}_c)\rightarrow (L^{0}(\mathcal{F},K),{\cal T}_{\epsilon,\lambda})$ is continuous, for each
$n\in N$ there exist at least some $\bar{\varepsilon}_{n}\in
L^{0}_{++}$ and $\|\cdot\|_{n}\in \mathcal{F}(\mathcal{P})$ such
that
$$f(B_{n}(\bar{\varepsilon}_{n}))\subset
V_{0}(\varepsilon_{n},\lambda_{n}),$$ where
$B_{n}(\bar{\varepsilon}_{n})=\{x\in E~|~\|x\|_{n}\leq
\bar{\varepsilon}_{n}\}$. Let
$\|\cdot\|_{\mathcal{Q}_{1}}=\|\cdot\|_{1}$,
$\|\cdot\|_{\mathcal{Q}_{n}}=\|\cdot\|_{1}\bigvee\cdots
\bigvee\|\cdot\|_{n}, \forall n\geq 2$ and
$B_{\mathcal{Q}_{n}}(\bar{\varepsilon}_{n})=\{x\in
E~|~\|x\|_{\mathcal{Q}_{n}}\leq\bar{\varepsilon}_{n}\}$, then
$$f(B_{\mathcal{Q}_{n}}(\bar{\varepsilon}_{n}))\subset
V_{0}(\varepsilon_{n},\lambda_{n})$$ and
$$B_{\mathcal{Q}_{n+1}}(1)\subset B_{\mathcal{Q}_{n}}(1), \forall n\in
N,$$ where $B_{\mathcal{Q}_{n}}(1)=\{x\in
E~|~\|x\|_{\mathcal{Q}_{n}}\leq 1\},~\forall n\in N$.

Denote $\{|f(x)|~|~x\in
B_{\mathcal{Q}_{n}}(\bar{\varepsilon}_{n})\}$ by $G_{n}$ and
$\bigvee G_{n}$ by $\eta_{n}$. First, it is easy to see that $G_{n}$
is a directed set in $L^{0}_{+}$, then there exists a sequence
$\{x_{n,k}~|~k\in N\}$ in
$B_{\mathcal{Q}_{n}}(\bar{\varepsilon}_{n})$ such that
$$\{|f(x_{n,k})|~|~k\in N\}\nearrow \eta_{n}.$$ Since $P(\{\omega\in \Omega~|~|f(x_{n,k})|(\omega)
<\varepsilon_{n} \})>1-\lambda_{n}, \forall k\in N$, then
$$P(\{\omega\in \Omega~|~\eta_{n}(\omega)\leq 2\varepsilon_{n}
\})>1-\lambda_{n}, \forall n\in N.$$ From $\eta_{n}=\bigvee
G_{n}=\bigvee\{|f(x)|~|~x\in
B_{\mathcal{Q}_{n}}(\bar{\varepsilon}_{n})\}$, we can obtain that
$$\frac{1}{\bar{\varepsilon}_{n}}\cdot\eta_{n}=\bigvee\{|f(x)|~|~x\in
B_{\mathcal{Q}_{n}}(1)\}, \forall n\in N$$ and $$P(\{\omega\in
\Omega~|~\frac{1}{\bar{\varepsilon}_{n}(\omega)}\cdot\eta_{n}(\omega)<+\infty\})=P(\{\omega\in
\Omega~|~\eta_{n}(\omega)<+\infty\})$$ $$\geq
P(\{\omega\in\Omega~|~\eta_{n}(\omega)\leq
2\varepsilon_{n}\})\geq1-\lambda_{n}.$$ Since
$B_{\mathcal{Q}_{n+1}}(1)\subset B_{\mathcal{Q}_{n}}(1)$ and
$\lambda_{n}\searrow 0$, it is clear that $$P(\{\omega\in
\Omega~|~\eta_{n}(\omega)<+\infty\})\leq P(\{\omega\in
\Omega~|~\eta_{n+1}(\omega)<+\infty\}),$$ $$\{P(\{\omega\in
\Omega~|~\eta_{n}(\omega)<+\infty\})~|~n\in N\}\nearrow 1$$ and
$$\tilde{\Omega}=\cup_{n=1}^{\infty}[\eta_{n}<\infty].$$ Taking $A_{1}=\{\omega\in \Omega~|~\frac{1}
{\bar{\varepsilon}_{1}^{0}(\omega)}\cdot\eta^{0}_{1}(\omega)<+\infty\}$,
$A_{i}=\{\omega\in \Omega~|~\frac{1}
{\bar{\varepsilon}_{i}^{0}(\omega)}\cdot\eta^{0}_{i}(\omega)<\frac{1}
{\bar{\varepsilon}_{i-1}^{0}(\omega)}\cdot\eta^{0}_{i-1}(\omega)=+\infty\}$,
$\forall i\geq 2$, then $\{A_{n}~|~n\in N\}$ forms a countable
partition of $\Omega$ to $\mathcal{F}$. For each $n\in N$, define
$P_{n}:A_{n}\cap\mathcal{F}\rightarrow [0,1]$ by $P_{n}(A_n\cap F)=\frac{P(A_n\cap F)}{P(A_n)},\forall F\in{\cal F}$,
$E^{(n)}=\tilde{I}_{A_{n}}\cdot E$, $\|\cdot\|^{(n)}=$ the
limitation of $\|\cdot\|_{\mathcal{Q}_{n}}$ to $E^{(n)}$, and
$f^{(n)}=$ the limitation of $f$ to $E^{(n)}$, then $E^{(n)}$ is a
left module over the algebra $L^{0}(A_{n}\cap\mathcal{F},K)$.
Applying Lemma \ref{lem:[8]} to $E^{(n)}$, $f^{(n)}$ and
$\|\cdot\|_{\mathcal{Q}_{n}}$yields
$$\tilde{I}_{A_{n}}|f(x)|\leq \frac{\eta_{n}}{\bar{\varepsilon}_{n}}\cdot \tilde{I}_{A_{n}}\cdot
\|x\|_{\mathcal{Q}_{n}},\forall n\in N.$$
Let $\xi_{n}=\frac{\eta_{n}}{\bar{\varepsilon}_{n}}\cdot
\tilde{I}_{A_{n}}$, then
$$|f(x)|\leq\Sigma_{n=1}^{\infty}\tilde{I}_{A_{n}}\cdot\xi_{n}\cdot\|x\|_{\mathcal{Q}_{n}},$$
$\forall x\in E$. By Definition \ref{def:homomorphismI} it is obvious that
$E^{\ast}_{max}\subset E^{\ast}_{\varepsilon,\lambda}$. Finally, since it
is clear that $E^{\ast}_{\varepsilon,\lambda}\subset
E^{\ast}_{max}$, then we can obtain that $E^{\ast}_{max}=
E^{\ast}_{\varepsilon,\lambda}$.

This completes the proof of Theorem \ref{thm:maxel}.

Before giving the proof of Theorem \ref{thm:minel}, we need the following topological terminology and several lemmas on totally disconnected spaces.

\begin{definition}[\cite{Conway}]\label{def:convey} A topological
space $(X,\mathcal{T})$ is called totally disconnected if for any $
x\in X$ and any neighborhood $U$ of $x$, there is
$V\subset X$ that is both $\mathcal{T}-$ open and
$\mathcal{T}-$closed such that $$x\in V\subset U.$$
\end{definition}

From Definition \ref{def:convey}, we can easily obtain the following:

\begin{lemma}\label{lem:3.3} Let $(X,\mathcal{T})$ be a Hausdorff
and totally disconnected space. Then any nonempty $A\subset X$ is a connected
subset iff $A$ is a single point set.
\end{lemma}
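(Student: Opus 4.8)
The plan is to prove the two implications separately, with all the content residing in the ``only if'' direction. The ``if'' direction is immediate: if $A=\{x\}$ is a single point set, then its subspace topology is trivial, so $A$ is connected, and nothing about $X$ is needed.

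For the ``only if'' direction I would argue by contradiction. Suppose $A$ is connected and contains two distinct points $x$ and $y$. Since $(X,\mathcal{T})$ is Hausdorff, there are disjoint $\mathcal{T}$-open sets $U\ni x$ and $W\ni y$; in particular $U$ is a neighborhood of $x$ with $y\notin U$. Now apply Definition \ref{def:convey}: there is a set $V\subset X$ that is simultaneously $\mathcal{T}$-open and $\mathcal{T}$-closed with $x\in V\subset U$, and since $y\notin U$ we get $y\notin V$. Consider the trace sets $V\cap A$ and $(X\setminus V)\cap A$. As $V$ is $\mathcal{T}$-open and $X\setminus V$ is $\mathcal{T}$-open (because $V$ is also $\mathcal{T}$-closed), both of these sets are open in the subspace topology of $A$; they are disjoint, their union is $A$, and $x\in V\cap A$ while $y\in (X\setminus V)\cap A$, so both are nonempty. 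This is a disconnection of $A$, contradicting the connectedness of $A$. Hence $A$ can contain at most one point, and being nonempty it is a single point set.

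I expect essentially no obstacle here; the proof is short and routine. The only two places that deserve a moment's care are: (i) the use of the Hausdorff hypothesis to produce a neighborhood $U$ of $x$ that omits $y$ — this step is genuinely necessary, since a two-point indiscrete space vacuously satisfies Definition \ref{def:convey} yet is connected and not a singleton — and (ii) the observation that a subset clopen in $X$ has a trace on $A$ that is clopen in $A$, which is exactly what turns $V$ into a valid separator of $A$.
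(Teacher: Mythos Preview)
Your proof is correct and is exactly the natural argument one extracts from Definition~\ref{def:convey}; the paper itself does not spell out a proof of this lemma, merely remarking that it follows easily from that definition. Your careful remarks about where the Hausdorff hypothesis is actually used and why the trace of a clopen set is clopen in the subspace are accurate and, if anything, go beyond what the paper records.
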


Now let us recall the notion of an atom: Let $(\Omega,\mathcal
{F},P)$ be a probability space, a set $A\in \mathcal {F}$ is called
an atom (or, $P-$atom) if $P(A)>0$ and if $B\in \mathcal {F}$,
$B\subset A$, then either $P(B)=0$ or $P(A\setminus B)=0$. It is
clear that if $A_{1}$ and $A_{2}$ are atoms, then either
$P(A_{1}\cap A_{2})=0$ or $P(A_{1}\Delta A_{2})=0$. Also, it is easy to see that
$(\Omega,\mathcal {F},P)$ essentially has at most countably many disjoint
atoms. In this paper, we say that $(\Omega,\mathcal {F},P)$ is
essentially purely $P-$atomic if there exists at most, a countable
family $\{A_{n}~|~n\in N\}$ of disjoint atoms such that
$\Omega=\sum_{n=1}^{\infty}A_{n}$ and such that for each $A\in
\mathcal {F}$ there is $B$ in the $\sigma-$algebra generated by the
family $\{A_{n}~|~n\in N\}$ such that $P(A\Delta B)=0$. A
probability space $(\Omega,\mathcal {F},P)$ without any atoms is
called nonatomic.

For a nonatomic probability space $(\Omega,\mathcal {F},P)$, the
following two lemmas are known and very important for the
proof of Lemma \ref{lem:3.6} below.

\begin{lemma}[\cite{Dudley}]\label{lem:Dudley} If $(\Omega,\mathcal
{F},P)$ is a nonatomic probability space, then the range of $P$ is
the whole interval $[0,1]$.
\end{lemma}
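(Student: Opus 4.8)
The plan is to prove the only nontrivial inclusion, namely $[0,1] \subseteq \{P(A) : A \in \mathcal{F}\}$, the opposite inclusion being automatic since $P$ is a probability measure. Fixing $t \in (0,1)$ (the values $0$ and $1$ are realized by $\emptyset$ and $\Omega$), I must exhibit $A \in \mathcal{F}$ with $P(A) = t$. The first ingredient I would isolate is a \emph{small-pieces lemma}: for every $A \in \mathcal{F}$ with $P(A) > 0$ and every $\delta > 0$ there is $B \in \mathcal{F}$, $B \subseteq A$, with $0 < P(B) \le \delta$. This is where nonatomicity enters: since $(\Omega,\mathcal{F},P)$ has no atoms at all, \emph{any} set of positive measure is splittable into two measurable pieces of strictly positive measure, the smaller having at most half the measure of its parent; iterating this bisection $k$ times inside $A$ produces a measurable subset of measure in $(0, 2^{-k}P(A)]$, and one picks $k$ so that $2^{-k}P(A) \le \delta$.

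Next I would run an exhaustion argument to pin the value down exactly. Set $E_0 = \emptyset$, and given $E_n \in \mathcal{F}$ with $P(E_n) \le t$: if $P(E_n) = t$, stop and output $E_n$; otherwise $P(E_n^c) > 0$, so by the small-pieces lemma there are positive-measure sets $C \subseteq E_n^c$ with $P(C) \le t - P(E_n)$, and I put $r_n = \sup\{P(C) : C \subseteq E_n^c, \ P(C) \le t - P(E_n)\} > 0$, choose $C_n \subseteq E_n^c$ with $P(C_n) > r_n/2$ and $P(E_n) + P(C_n) \le t$, and set $E_{n+1} = E_n \cup C_n$. The $C_n$ are pairwise disjoint, so $P(E_n) = \sum_{k<n} P(C_k)$ is nondecreasing with limit some $s \le t$; hence $E := \bigcup_n E_n$ has $P(E) = s$ and $\sum_k P(C_k) = s < \infty$, forcing $P(C_n) \to 0$. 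To conclude $s = t$, and thus $P(E) = t$, suppose $s < t$: then $P(E^c) = 1 - s > 0$, so the small-pieces lemma yields $D \subseteq E^c$ with $0 < P(D) \le t - s$; since $D \subseteq E_n^c$ and $P(D) \le t - s \le t - P(E_n)$ for every $n$, we get $r_n \ge P(D)$ and hence $P(C_n) > P(D)/2 > 0$ for all $n$, contradicting $P(C_n) \to 0$.

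The hard part will be the second step, not the first. The small-pieces lemma is a routine consequence of the definitions once one observes that in a nonatomic space every positive-measure set splits. By contrast, nonatomicity only ever hands you one binary split at a time, so producing a set of \emph{precisely} prescribed measure forces the infinitary construction above; the two places to be careful are (i) taking each $C_n$ to be merely a definite fraction $r_n/2$ of the current slack, rather than a set of measure exactly $r_n$, which the supremum need not attain, and (ii) using the resulting $P(C_n) \to 0$ against the uniform lower bound $r_n \ge P(D)$ to exclude the possibility that the process saturates at some $s < t$.
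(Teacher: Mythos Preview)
Your argument is correct: the small-pieces lemma follows by iterated bisection from the definition of nonatomicity, and the exhaustion argument is the standard Sierpi\'nski-type construction; the contradiction step via $r_n \ge P(D)$ against $P(C_n)\to 0$ is handled cleanly.

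There is, however, nothing to compare against in the paper itself. The paper does not prove this lemma; it simply records it as a known fact and cites Dudley's \emph{Real Analysis and Probability} (reference~\cite{Dudley}). Your write-up therefore supplies a proof where the paper offers only a citation, and the approach you give is essentially the classical one found in standard measure-theory texts (including Dudley's).
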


\begin{lemma}\label{lem:3.5} Let $(\Omega,\mathcal {F},P)$ be a
nonatomic probability space and $A\in\mathcal {F}$ with $P(A)>0$.
Then there is a countable partition $\{A_n\mid n\in N\}$ of $A$ to
$\mathcal {F}$ such that $P(A_{n})=\frac{1}{2^{n}}P(A)$.
\end{lemma}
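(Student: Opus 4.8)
The plan is to build the partition recursively, splitting off a ``half'' of what is left over at each stage and invoking Lemma \ref{lem:Dudley} every time. The first thing I would record is that nonatomicity is inherited under restriction to a positive‑measure set: if $B\in\mathcal{F}$ with $P(B)>0$ and $P_{B}$ denotes the probability measure $P_{B}(\cdot)=P(\cdot\cap B)/P(B)$ on $(B,B\cap\mathcal{F})$, then $(B,B\cap\mathcal{F},P_{B})$ is again nonatomic, since an atom of $P_{B}$ contained in $B$ would also be a $P$‑atom of $(\Omega,\mathcal{F},P)$. Applying Lemma \ref{lem:Dudley} to $(B,B\cap\mathcal{F},P_{B})$ therefore yields the following form in which it will actually be used: for every $B\in\mathcal{F}$ with $P(B)>0$ and every real $t\in[0,P(B)]$ there exists $C\in\mathcal{F}$ with $C\subset B$ and $P(C)=t$.

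Next I would carry out the recursion. Set $B_{0}=A$. Suppose $B_{n-1}\in\mathcal{F}$ has been defined with $B_{n-1}\subset A$ and $P(B_{n-1})=\frac{1}{2^{n-1}}P(A)$ (which is positive, as $P(A)>0$). Since $\frac{1}{2^{n}}P(A)\le\frac{1}{2^{n-1}}P(A)=P(B_{n-1})$, the fact displayed above applied to $B=B_{n-1}$ and $t=\frac{1}{2^{n}}P(A)$ produces a set $A_{n}\in\mathcal{F}$ with $A_{n}\subset B_{n-1}$ and $P(A_{n})=\frac{1}{2^{n}}P(A)$. Put $B_{n}=B_{n-1}\setminus A_{n}$; then $B_{n}\subset A$, $P(B_{n})=P(B_{n-1})-P(A_{n})=\frac{1}{2^{n}}P(A)$, so the recursion continues, and by construction the sets $A_{1},A_{2},\dots$ are pairwise disjoint subsets of $A$ with the prescribed measures.

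Finally I would tidy up the null remainder. Since $\sum_{n=1}^{\infty}P(A_{n})=\sum_{n=1}^{\infty}\frac{1}{2^{n}}P(A)=P(A)$, the set $Z:=A\setminus\bigcup_{n\ge 1}A_{n}$ satisfies $P(Z)=0$. Replacing $A_{1}$ by $A_{1}\cup Z$ does not change $P(A_{1})=\frac{1}{2}P(A)$ and turns $\{A_{n}\mid n\in N\}$ into a genuine countable partition of $A$ into $\mathcal{F}$‑measurable sets with $P(A_{n})=\frac{1}{2^{n}}P(A)$ for all $n\in N$, which is the assertion of the lemma.

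The only point requiring any care is the passage, at every stage of the recursion, from Lemma \ref{lem:Dudley} (stated for the whole space) to its relative version for the leftover set $B_{n-1}$; this is exactly the inheritance of nonatomicity under restriction noted in the first paragraph, and once it is in hand the rest of the argument is routine bookkeeping.
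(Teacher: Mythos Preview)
Your argument is correct: the inheritance of nonatomicity under restriction together with Lemma~\ref{lem:Dudley} gives exactly the subset $A_n\subset B_{n-1}$ of measure $\tfrac{1}{2^n}P(A)$ needed at each stage, and the absorption of the null remainder into $A_1$ yields a genuine partition. The paper itself states Lemma~\ref{lem:3.5} without proof, treating it as a standard consequence of Lemma~\ref{lem:Dudley}; your recursion is precisely the natural way to fill in that omitted step, so nothing more is needed.
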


\begin{lemma}\label{lem:3.6} Let $(\Omega,\mathcal{F},P)$ be a
nonatomic probability space, $(E,\|\cdot\|)$ an $RN$
module over $K$ with the base $(\Omega,\mathcal{F},P)$,
$B^{\circ}(1)=\{x\in E~|~\|x\|<1$ on $\Omega\}$ and $M=\{x\in
B^{\circ}(1)~|~\exists m_{x}\in R, 0<m_{x}<1$ such that $\|x\|<m_{x}$
on $\Omega\}$. Then $M$ is an $L^{0}-$convex, $\mathcal
{T}_{c}-$closed and $\mathcal {T}_{c}-$open subset of $E$.
\end{lemma}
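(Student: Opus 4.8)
The plan is to verify each of the three assertions ($L^0$-convexity, $\mathcal{T}_c$-closedness, $\mathcal{T}_c$-openness) separately, with the openness claim being the delicate one and the place where nonatomicity of $(\Omega,\mathcal{F},P)$ is genuinely used. For $L^0$-convexity: given $x,y\in M$ with real constants $m_x,m_y\in(0,1)$ such that $\|x\|<m_x$ and $\|y\|<m_y$ on $\Omega$, and $\xi\in L^0_+$ with $0\leq\xi\leq 1$, I would set $m=\max\{m_x,m_y\}<1$ and estimate, using that $\|\cdot\|$ is an $L^0$-seminorm, $\|\xi x+(1-\xi)y\|\leq\xi\|x\|+(1-\xi)\|y\|<\xi m+(1-\xi)m=m<1$ on $\Omega$; hence $\xi x+(1-\xi)y\in M$ with witness $m$. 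This is routine.

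For $\mathcal{T}_c$-closedness I would show $E\setminus M$ is $\mathcal{T}_c$-open. Fix $x\notin M$. The relevant fact is that $\|x\|$ cannot be a.s.\ bounded by any real constant strictly below $1$; equivalently, for every real $m<1$ the set $[\|x\|\geq m]$ has positive probability, and more to the point $\bigvee\{\|x\|\wedge 1\}$ — reasoning pointwise — equals $1$ with positive probability on arbitrarily small neighborhoods of the level $1$, OR $\|x\|$ exceeds $1$ on a positive-probability set. In either case I would produce $\varepsilon\in L^0_{++}$ such that the $\mathcal{T}_c$-ball $x+N_\theta(\|\cdot\|,\varepsilon)=\{y:\|y-x\|\leq\varepsilon\}$ misses $M$: if $y$ lies in this ball then $\|y\|\geq\|x\|-\varepsilon$, and choosing $\varepsilon$ small enough on the offending set forces $\|y\|$ to still reach $1$ (or exceed it) with positive probability, so no single real constant $m<1$ can dominate $\|y\|$. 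I expect to split into the cases $P[\|x\|\geq 1]>0$ (easy, any small constant $\varepsilon$ works) and $\|x\|<1$ on $\Omega$ but $\mathrm{ess\,sup}\,\|x\|=1$ (here one must exploit that the essential supremum is $1$ to choose $\varepsilon$ so that $x+N_\theta(\|\cdot\|,\varepsilon)\cap M=\emptyset$), keeping in mind that the essential supremum being $1$ means for every $\delta>0$ the set $[\|x\|>1-\delta]$ has positive measure.

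For $\mathcal{T}_c$-openness — the heart of the lemma — fix $x\in M$ with real $m_x\in(0,1)$, $\|x\|<m_x$ on $\Omega$. I want $\varepsilon\in L^0_{++}$ with $x+N_\theta(\|\cdot\|,\varepsilon)\subset M$; that is, for every $y$ with $\|y-x\|\leq\varepsilon$ there is a real $m_y<1$ with $\|y\|<m_y$ on $\Omega$. The obstruction is that a constant $\varepsilon\equiv c>0$ will not do: $\|y\|\leq\|x\|+\|y-x\|<m_x+c$, which is $<1$ only if $c<1-m_x$, but then $\|y\|$ is bounded by the real constant $m_x+c<1$, so actually a constant $\varepsilon=c$ with $0<c<1-m_x$ \emph{does} suffice for this direction; the subtlety is instead showing this is consistent with $N_\theta(\|\cdot\|,c)$ being a genuine $\mathcal{T}_c$-neighborhood, which it is by Definition 2.6 since $c\in L^0_{++}$. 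So I would simply take $\varepsilon=\frac{1-m_x}{2}\in L^0_{++}$ (a constant function, hence in $L^0_{++}$), and for $y\in x+N_\theta(\|\cdot\|,\varepsilon)$ get $\|y\|\leq\|x\|+\|y-x\|<m_x+\frac{1-m_x}{2}=\frac{1+m_x}{2}=:m_y<1$ on $\Omega$, whence $y\in M$. This shows $M$ is $\mathcal{T}_c$-open; remarkably this direction needs no nonatomicity at all. Where nonatomicity must therefore enter is in $\mathcal{T}_c$-closedness, guaranteeing (via Lemmas 3.4 and 3.5 on the range of $P$ and dyadic partitions) that $M$ is a \emph{proper} nonempty clopen set — the point being that if $(\Omega,\mathcal{F},P)$ had atoms then $M$ might fail to be closed, or $B^\circ(1)\setminus M$ might be empty; so I would also remark that $M\neq\emptyset$ and $M\neq E$, using nonatomicity to build, for any $x$ with $\|x\|$ bounded by some constant $<1$, a perturbation realizing the essential supremum exactly $1$, which is exactly what the dyadic partition of Lemma 3.5 delivers. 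The main obstacle I anticipate is the closedness argument's case where $\mathrm{ess\,sup}\,\|x\|=1$ while $\|x\|<1$ pointwise: pinning down the right $\varepsilon\in L^0_{++}$ there, and confirming it against Definition 2.6, is the one spot requiring care.
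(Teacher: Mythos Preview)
Your treatment of $L^{0}$-convexity and of $\mathcal{T}_c$-openness is correct and matches the paper's argument exactly (constant $\varepsilon=\tfrac{1-m_x}{2}$ for openness, $\max(m_{x},m_{y})$ for convexity). The gap is in your closedness argument, and specifically in Case~(1), which you dismiss as ``easy, any small constant $\varepsilon$ works.'' This is false, and it is precisely here---not in Case~(2), and not in any statement about $M$ being proper---that nonatomicity is indispensable.

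To see why a constant fails in Case~(1), work in $E=L^{0}(\mathcal{F},R)$ with the absolute value. Take $x=\tilde{I}_{D}$ for any $D\in\mathcal{F}$ with $P(D)>0$, so $|x|=1$ on $D$ and $x\notin M$. For any constant $c\in(0,1)$, the element $\tilde y=(1-c)\tilde{I}_{D}$ satisfies $|x-\tilde y|=c\,\tilde{I}_{D}\le c$, so $\tilde y\in x+B(c)$; yet $|\tilde y|\le 1-c$ on all of $\Omega$, so $\tilde y\in M$. Thus $x+B(c)\not\subset E\setminus M$ for every constant $c$. The paper's remedy is to invoke Lemma~3.5 (dyadic partitions, available only under nonatomicity) to split $D$ into countably many pieces $D_{n}$ of positive measure and set $\varepsilon=\tilde{I}_{D^{c}}+\sum_{n\ge1}\tfrac{1}{n}\tilde{I}_{D_{n}}\in L^{0}_{++}$; then every $\tilde y\in x+B(\varepsilon)$ satisfies $\|\tilde y\|\ge 1-\tfrac{1}{n}$ on $D_{n}$ for all $n$, so $\|\tilde y\|$ has essential supremum at least $1$ and $\tilde y\notin M$. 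Your Case~(2) intuition is sound and does not itself require nonatomicity (the level sets $\{1-\tfrac{1}{n}<\|x\|\le 1-\tfrac{1}{n+1}\}$ already furnish the needed partition), so you have the roles of the two cases reversed.
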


\begin{proof} If $x_{1}$, $x_{2}\in M$, then according to the
definition of $M$ there are two positive real numbers $m_{x_{1}}<1$
and $m_{x_{2}}<1$ such that $\|x_{1}\|<m_{x_{1}}$ and
$\|x_{2}\|<m_{x_{2}}$ on $\Omega$. It is easy to see that
$\|\xi\cdot x_{1}+(1-\xi)\cdot
x_{2}\|\leq|\xi|\cdot\|x_{1}\|+|1-\xi|\cdot\|x_{2}\|<max(m_{x_{1}},m_{x_{2}})<
1$ on $\Omega$, where $\xi\in L^{0}_{+}$ with $0 \leq\xi\leq 1$.
Thus, $M$ is $L^{0}-$convex.

Now, we prove that $M$ is a $\mathcal {T}_{c}-$closed subset of $E$.
We only need to check that $E\setminus M$ is $\mathcal
{T}_{c}-$open and this can proceed in the following two cases:

\vskip0.1in

Case(1): when $y_{1}\in E\setminus M$ and $y_{1}\not\in
B^{\circ}(1)$, then there is $D_{1}\in \mathcal {F}$ with
$P(D_{1})>0$ such that $\|y_{1}\|\geq 1$ on $D_{1}$. By Lemma \ref{lem:3.5},
there is a countable partition $\{D_{1,n}~|~ n\in N\}$ of $D_{1}$
to $\mathcal {F}$ such that $P(D_{1,n})=\frac{1}{2^{n}}P(D_{1})$. Let
$$\varepsilon_{1}=\tilde{I}_{D_{1}^{c}}+\Sigma_{n=1}^{\infty}\frac{1}{n}\cdot\tilde{I}_{D_{1,n}}$$
and $$B(\varepsilon_{1})=\{x\in E~|~\|x\|\leq \varepsilon_{1}\},$$
then $B(\varepsilon_{1})$ is a neighborhood of 0, and for any
$\tilde{y}_{1}\in y_{1}+B(\varepsilon_{1})$ it is easy to see that
$$\|y_{1}\|-\|\tilde{y}_{1}\|\leq \|y_{1}-\tilde{y}_{1}\|\leq \varepsilon_{1}$$
and
$$\|\tilde{y}_{1}\|\geq\|y_{1}\|-\varepsilon_{1}\geq 1-\frac{1}{n}~on~D_{1,n},$$ namely
$P([\|\tilde{y}_{1}\|\geq1-\frac{1}{n}])\geq P(D_{1,n})>0$, $\forall
n\in N$. Consequently, we have that $\tilde{y}_{1}\not\in M$
and $y_{1}+B(\varepsilon_{1})\subset E\setminus M$.

Case(2): when $y_{2}\in E\setminus M$ and $y_{2}\in B^{\circ}(1)$,
then $\|y_{2}\|<1$ on $\Omega$ and $P(\{\omega\in \Omega
~|~\|y_{2}\|(\omega)>1-\frac{1}{n}\})>0$ for each $n\in N$ by the definition of $M$. Let $H_{n}=\{\omega\in
\Omega~|~\|y_{2}\|^{0}(\omega)>1-\frac{1}{n}\}$, $\forall n\in N$,
$D_{2,n}=H_{n}\setminus H_{n+1}=\{\omega\in
\Omega~|~1-\frac{1}{n}<\|y_{2}\|^{0}(\omega)\leq1-\frac{1}{n+1}\}$,
where $y_{2}^{0}$ is an arbitrarily chosen representative of
$y_{2}$, then for any $i,j\in N$ and $i\neq j$, $D_{2,i}\cap
D_{2,j}=\emptyset$ and $H_{i}=\sum_{n=i}^{\infty}D_{2,n}$. Assume
that there is $k\in N$ such that $P(D_{2,n})=0$, $\forall n\geq k$,
then it implies that $P(H_{k})=\Sigma_{n=k}^{\infty}P(D_{2,n})=0$, which is impossible. Hence, there exists a subsequence
$\{n_{k}~|~k\in N\}$ of $N$ such that $P(D_{2,n_{k}})>0$, $\forall
k\in N$ and we can suppose, without loss of generality,
$P(D_{2,n})>0$ for each $n\in N$. Let
$$D_{2}=\Sigma_{n=1}^{\infty}D_{2,n},$$
$$\varepsilon_{2}=\tilde{I}_{D_{2}^{c}}+\Sigma_{n=1}^{\infty}\frac{1}{n}\cdot
\tilde{I}_{D_{2,n}}$$ and
$$B(\varepsilon_{2})=\{x\in E~|~\|x\|\leq \varepsilon_{2}\},$$ then
$B(\varepsilon_{2})$ is a neighborhood of 0, and for any
$\tilde{y}_{2}\in y_{2}+B(\varepsilon_{2})$ it is obvious that
$$\|y_{2}\|-\|\tilde{y}_{2}\|\leq \|y_{2}-\tilde{y}_{2}\|\leq
\varepsilon_{2}$$ and
$$\|\tilde{y}_{2}\|\geq \|y_{2}\|-\varepsilon_{2}\geq 1-\frac{1}{n}-\frac{1}{n}=1-\frac{2}{n}~on~D_{2,n},$$
namely $P([\|\tilde{y}_{2}\|\geq1-\frac{2}{n}])\geq P(D_{2,n})>0$,
$\forall n\in N$. Consequently, we have that
$\tilde{y}_{2}\not\in M$ and $y_{2}+B(\varepsilon_{2})\subset
E\setminus M$.

From the two cases above, it is clear that $M$ is a $\mathcal
{T}_{c}-$closed subset of $E$.

Finally, we prove that $M$ is a
$\mathcal {T}_{c}-$open subset of $E$ as follows:
if $y\in M$, then $y\in B^{\circ}(1)$ and there is a positive real number
$m_{y}<1$ such that $\|y\|< m_{y}$ on $\Omega$. Let
$\varepsilon^{0}:\Omega\rightarrow R$ be defined by
$\varepsilon^{0}(\omega)=\frac{1-m_{y}}{2}$ for all $\omega\in
\Omega$ and $\varepsilon\in L_{++}^{0}$ the equivalence class of
$\varepsilon^{0}$, then $B(\varepsilon)=\{x\in E~|~\|x\|\leq
\varepsilon\}$ is a neighborhood of 0, and for any
$\tilde{y}\in y+B(\varepsilon)$
$$\|\tilde{y}\|-\|y\|\leq \|y-\tilde{y}\|\leq
\varepsilon$$ and
$$\|\tilde{y}\|\leq m_{y}+\frac{1-m_{y}}{2}=\frac{1+m_{y}}{2}<1$$ on $\Omega$.
Hence, $y+B(\varepsilon)\subset M$ and $M$ is a $\mathcal {T}_{c}-$open subset of
$E$.

This completes the proof.

\end{proof}

\begin{lemma}\label{lem:3.7} Let $(\Omega,\mathcal{F},P)$ be a
nonatomic probability space and $(E,\|\cdot\|)$ a random normed
module over $K$ with the base $(\Omega,\mathcal{F},P)$. Then
$(E,\mathcal{T}_{c})$ is a Hausdorff and totally disconnected space.
\end{lemma}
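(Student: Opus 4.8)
The plan is to check the two assertions separately. Hausdorffness is not new: it is already recorded in Definition~\ref{def:maxtopology}, where $(E,\mathcal{T}_{c})$ is noted to be a Hausdorff topological group under addition. So the real point is total disconnectedness in the sense of Definition~\ref{def:convey}, that is, for every $x\in E$ and every neighborhood $U$ of $x$ one must find a set $V$ that is simultaneously $\mathcal{T}_{c}$-open and $\mathcal{T}_{c}$-closed with $x\in V\subset U$. Two reductions make this manageable. First, since $(E,\mathcal{T}_{c})$ is a topological group under addition, the translation $y\mapsto y-x$ is a homeomorphism, so it is enough to produce such a clopen set around the null element $\theta$ inside $U-x$. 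Second, since $(E,\|\cdot\|)$ is an $RN$ module, the sets $N_{\theta}(\varepsilon):=\{y\in E~|~\|y\|\leq\varepsilon\}$ with $\varepsilon\in L^{0}_{++}$ form a local base at $\theta$ for $\mathcal{T}_{c}$ (Definition~\ref{def:maxtopology}); hence it suffices to show that for every $\varepsilon\in L^{0}_{++}$ there is a $\mathcal{T}_{c}$-clopen $V$ with $\theta\in V\subset N_{\theta}(\varepsilon)$.

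For this last claim I would use the set $M$ furnished by Lemma~\ref{lem:3.6}, suitably rescaled: put $V=\varepsilon\cdot M$. By Lemma~\ref{lem:3.6}, $M$ is both $\mathcal{T}_{c}$-open and $\mathcal{T}_{c}$-closed, and it contains $\theta$ (since $\|\theta\|=0$, one may take $m_{\theta}=\frac{1}{2}$). Because $\varepsilon\in L^{0}_{++}$ is a unit of the ring $L^{0}(\mathcal{F},K)$ and $E$ is a topological module over $(L^{0}(\mathcal{F},K),\mathcal{T}_{c})$, the map $y\mapsto\varepsilon\cdot y$ is a $\mathcal{T}_{c}$-homeomorphism of $E$, with inverse $y\mapsto\frac{1}{\varepsilon}\cdot y$; therefore $V=\varepsilon\cdot M$ is again $\mathcal{T}_{c}$-clopen and $\theta=\varepsilon\cdot\theta\in V$. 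Finally, for any $y\in M$ there is a real number $m_{y}\in(0,1)$ with $\|y\|<m_{y}$ on $\Omega$, whence $\|\varepsilon\cdot y\|=\varepsilon\cdot\|y\|<\varepsilon\cdot m_{y}\leq\varepsilon$ on $\Omega$, so $\varepsilon\cdot M\subset N_{\theta}(\varepsilon)$. Translating back, $x+V$ is the desired clopen set with $x\in x+V\subset U$, so $(E,\mathcal{T}_{c})$ is totally disconnected.

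The only genuine difficulty of the whole argument is already absorbed into Lemma~\ref{lem:3.6}, where nonatomicity is used (through Lemma~\ref{lem:3.5}) to chop the ``bad'' set into countably many pieces of positive measure and thereby prove that $E\setminus M$ is $\mathcal{T}_{c}$-open; once Lemma~\ref{lem:3.6} is available the present statement is a short deduction, passing from the fixed scale $1$ on which $M$ is built to an arbitrary $\varepsilon\in L^{0}_{++}$ by one homeomorphism and from $\theta$ to a general point by another. The one small point to keep in mind is that ``neighborhood'' in Definition~\ref{def:convey} need not be open, which is harmless since any neighborhood of $x$ contains a $\mathcal{T}_{c}$-open set containing $x$.
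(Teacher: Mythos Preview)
Your proof is correct and follows essentially the same approach as the paper: reduce to showing total disconnectedness at $\theta$, invoke the clopen set $M$ from Lemma~\ref{lem:3.6}, and rescale by $\varepsilon\in L^{0}_{++}$ to fit $\varepsilon\cdot M$ inside the basic neighborhood $B_{\|\cdot\|}(\varepsilon)$. Your write-up is somewhat more explicit than the paper's (spelling out why $\theta\in M$, why multiplication by $\varepsilon$ is a homeomorphism, and the translation back to a general point), but the underlying argument is identical.
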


\begin{proof} We only need to check that $(E,\mathcal {T}_{c})$ is
totally disconnected. Let $M$ be the same
one as in Lemma \ref{lem:3.6},
$\mathcal{U}_{\theta}=\{B_{\|\cdot\|}(\varepsilon)~|~\varepsilon\in
L^{0}_{++}\}$, where $B_{\|\cdot\|}(\varepsilon)=\{x\in
E~|~\|x\|\leq\varepsilon\}$. Then $M$ is both $\mathcal {T}_{c}-$closed
and $\mathcal {T}_{c}-$open subset of $E$ by Lemma \ref{lem:3.6}. For each $\varepsilon\in L^{0}_{++}$,
it is easy to see that $M$ and $\varepsilon\cdot M$ are homeomorphic
and
$$0\in\varepsilon\cdot M\subset B_{\|\cdot\|}(\varepsilon).$$
Hence, by Definition \ref{def:maxtopology} and  Definition \ref{def:convey}, we have that
$(E,\mathcal {T}_{c})$ is
totally disconnected.

This completes the proof.

\end{proof}

We can now prove Theorem \ref{thm:minel}.

\noindent {\bf Proof of Theorem \ref{thm:minel}.}\quad Since $f\in E^{\ast}_{min}$,
$(E,\mathcal {T}_{\varepsilon,\lambda})$ is connected and
$(L^{0}(\mathcal{F},K),\mathcal {T}_{c})$ is a Haustorff and totally
disconnected space by Lemma \ref{lem:3.7}, it is clear that $f(E)$ is a
single point set of $L^{0}(\mathcal{F},K)$ by Lemma \ref{lem:3.3}. Hence,
$f(x)=0,\forall x\in E$.

This completes the proof.

\begin{lemma}\label{lem:3.8} Let $(\Omega,\mathcal {F}, P)$ be a
essentially purely $P-$atomic probability space and $(E,\mathcal {P})$
a random locally convex module over $K$ with the base
$(\Omega,\mathcal {F}, P)$. Then $f\in E^{\ast}_{min}$ iff there are
finite $P-$atoms $\{A_{i}~|~1\leq i\leq n\}$,
$\{\xi_{i}~|~1\leq i\leq n\}$ in $L^{0}_+$ and
$\{\mathcal{Q}_{i}~|~1\leq i\leq n\}$ in $\mathcal{F}(\mathcal{P})$
such that
$$|f(x)|\leq\Sigma_{i=1}^{n}\tilde{I}_{A_{i}}\cdot\xi_{i}\cdot\|x\|_{\mathcal{Q}_{i}},\forall x\in E.$$
\end{lemma}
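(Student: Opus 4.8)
The plan is to prove both directions of the equivalence, using the structure of an essentially purely $P$-atomic probability space, i.e. that $\Omega = \sum_{n=1}^\infty A_n$ for disjoint atoms $A_n$ and that on each atom $A_n$ the space $L^0(A_n\cap\mathcal F,K)$ is (essentially) just $K$, since any $\mathcal F$-measurable function is a.s. constant on an atom.

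\textbf{Sufficiency.} First I would handle the easy direction: suppose $|f(x)|\leq\sum_{i=1}^n\tilde I_{A_i}\cdot\xi_i\cdot\|x\|_{\mathcal Q_i}$ for finitely many atoms $A_i$. By Definition \ref{def:homomorphismII} this exhibits $f$ as an a.s. bounded random linear functional of type II (take the countable partition to be $\{A_1,\dots,A_n,(\bigcup_i A_i)^c\}$ with the coefficient $0$ on the last piece), so $f\in E^\ast_{II}=E^\ast_{\varepsilon,\lambda}$ by Proposition \ref{prop:continuousI}; thus $f$ is continuous from $(E,\mathcal T_{\varepsilon,\lambda})$ to $(L^0(\mathcal F,K),\mathcal T_{\varepsilon,\lambda})$. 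So it remains to check $f$ is continuous from $(E,\mathcal T_{\varepsilon,\lambda})$ into $(L^0(\mathcal F,K),\mathcal T_c)$. The key point is that, because only finitely many atoms appear, the $L^0_{++}$-valued radii that define $\mathcal T_c$-neighborhoods of $0$ in $L^0(\mathcal F,K)$ can, when restricted to $A:=\bigcup_{i=1}^n A_i$, be bounded below by a constant $\delta>0$ (an $\mathcal F$-measurable strictly positive function on finitely many atoms attains only finitely many values, all positive), while on $A^c$ one has $f(x)=0$ automatically. Hence a basic $\mathcal T_c$-neighborhood $N_0(\varepsilon)$ of $0$ contains $N_0(\delta) = V_0(\delta,\lambda')$-type sets pulled back, and an ordinary $(\varepsilon,\lambda)$-neighborhood of $\theta$ in $E$ maps inside it. I would write this out via the bound $|f(x)|\leq \xi\cdot\|x\|_{\mathcal Q}$ with $\xi=\sum_i\tilde I_{A_i}\xi_i$ supported on finitely many atoms and $\mathcal Q = \bigcup_i\mathcal Q_i$.

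\textbf{Necessity.} Conversely, suppose $f\in E^\ast_{min}$. Since $\mathcal T_c\supset\mathcal T_{\varepsilon,\lambda}$ on $L^0(\mathcal F,K)$, $f$ is a fortiori continuous from $(E,\mathcal T_{\varepsilon,\lambda})$ to $(L^0(\mathcal F,K),\mathcal T_{\varepsilon,\lambda})$, so $f\in E^\ast_{\varepsilon,\lambda}=E^\ast_{II}$ and there is a countable partition $\{B_j\mid j\in N\}$, sequences $\{\zeta_j\}$ in $L^0_+$ and $\{\mathcal R_j\}$ in $\mathcal F(\mathcal P)$ with $|f(x)|\leq\sum_j\tilde I_{B_j}\zeta_j\|x\|_{\mathcal R_j}$. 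Refining against the atomic partition $\{A_n\}$ I may assume each $B_j$ is a union of atoms, hence (regrouping) that the partition is exactly $\{A_n\mid n\in N\}$ with coefficients $\zeta_n,\mathcal R_n$. The real content is to show that $f$ must vanish on all but finitely many atoms. Suppose not: then there is an infinite set $S\subset N$ and, for each $n\in S$, an element $x_n\in E$ with $\tilde I_{A_n}f(x_n)\neq 0$, which after rescaling by a suitable element of $L^0$ supported on $A_n$ I can assume satisfies $\tilde I_{A_n}f(x_n) = c_n\tilde I_{A_n}$ for a large constant $c_n$ while $\tilde I_{A_n}\|x_n\|_{\mathcal Q}$ is as small as we like for any fixed $\mathcal Q$ — here I use that $x_n\mapsto \tilde I_{A_n}/(\text{value of }\|x_n\|_{\mathcal Q}\text{ on }A_n)\cdot x_n$ lives on a single atom where scalars behave like $K$. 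Then the element $x=\sum_{n\in S}\tilde I_{A_n}x_n$ (using the countable concatenation inherent in $E$ as an $L^0$-module) can be arranged to lie in every basic $\mathcal T_{\varepsilon,\lambda}$-neighborhood of $\theta$ — because only finitely many atoms carry more than $\varepsilon$-mass, so the $\|x\|_{\mathcal Q}<\varepsilon$ event has probability $>1-\lambda$ — yet $|f(x)| = \sum_{n\in S}c_n\tilde I_{A_n}$ is $\mathcal T_c$-far from $0$ since the radius function $\varepsilon\in L^0_{++}$ in $N_0(\varepsilon)$ must be finite on each atom and we chose the $c_n$ to beat it. This contradicts $\mathcal T_c$-continuity at $\theta$, so $f$ vanishes off a finite union of atoms, and on that finite union the type-II bound $|f(x)|\leq\sum_{n}\tilde I_{A_n}\zeta_n\|x\|_{\mathcal R_n}$ truncates to a finite sum, giving the claimed inequality.

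\textbf{Main obstacle.} The delicate step is the necessity direction, specifically constructing from a hypothetical infinite family of atoms on which $f\neq 0$ a single vector $x\in E$ that is simultaneously $(\varepsilon,\lambda)$-small (exploiting that probability mass on an infinite disjoint family is summable, so tails are negligible) and $\mathcal T_c$-non-small after applying $f$ (exploiting that a single $L^0_{++}$ radius is a.s. finite, hence bounded on each atom, but the $c_n$ can be chosen unboundedly). Making the concatenation $x=\sum_{n\in S}\tilde I_{A_n}x_n$ rigorous requires care that this element genuinely lies in $E$ and that $\|x\|_{\mathcal Q}$ and $f(x)$ distribute over the atoms as expected; this is where I would invoke that $\|\cdot\|_{\mathcal Q}$ is an $L^0$-seminorm and $f$ an $L^0$-module homomorphism, so both commute with multiplication by $\tilde I_{A_n}$. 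Everything else — the sufficiency direction and the reduction of the type-II partition to the atomic one — is routine bookkeeping with Propositions \ref{prop:continuousII} and \ref{prop:continuousI}.
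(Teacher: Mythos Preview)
Your sufficiency argument is essentially the paper's: on a finite union of atoms the two topologies agree (equivalently, an $L^0_{++}$ radius restricted to finitely many atoms is bounded below by a positive constant), and $f$ vanishes on the complement. That part is fine.

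The necessity argument has a real gap. You propose to build a \emph{single} element $x=\sum_{n\in S}\tilde I_{A_n}x_n$ by countable concatenation and argue that it lies in every $(\varepsilon,\lambda)$-neighborhood of $\theta$ while $f(x)$ stays $\mathcal T_c$-away from $0$. Two problems. First, a random locally convex module $E$ is \emph{not} assumed to have the countable concatenation property for elements; the property mentioned in the paper is a property of the family $\mathcal P$ of seminorms, not of $E$. There is no guarantee that your infinite sum defines an element of $E$ at all. Second, even if it did, a nonzero element cannot lie in every $(\varepsilon,\lambda)$-neighborhood of $\theta$, since $\mathcal T_{\varepsilon,\lambda}$ is Hausdorff; so as written the conclusion would force $x=\theta$, which is not what you want. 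The phrase ``$f(x)$ is $\mathcal T_c$-far from $0$'' is likewise not a statement that makes sense for a single point.

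The paper sidesteps both issues by working with a \emph{sequence} rather than a single concatenated vector. If $f$ fails to vanish on infinitely many atoms $A_{i_k}$, pick $x_k$ with $\tilde I_{A_{i_k}}|f(x_k)|>0$ and set $y_k=\tilde I_{A_{i_k}}\cdot Q(|f(x_k)|)\cdot x_k$, so $|f(y_k)|=\tilde I_{A_{i_k}}$. Each $y_k$ is supported on the single atom $A_{i_k}$, and since $\sum_k P(A_{i_k})\leq 1$ we have $P(A_{i_k})\to 0$; hence $y_k\to\theta$ in $\mathcal T_{\varepsilon,\lambda}$ (for any $\|\cdot\|_{\mathcal Q}$, the seminorm vanishes off $A_{i_k}$). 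But $|f(y_k)|=\tilde I_{A_{i_k}}\not\leq 1/2$ for all $k$, so $f(y_k)\not\to 0$ in $\mathcal T_c$, contradicting continuity. No concatenation is needed, and the obstacle you flag simply disappears.
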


\begin{proof} (1) Necessity: since $(\Omega,\mathcal {F},P)$
is essentially purely $P-$atomic and $E^{\ast}_{min}\subset
E^{\ast}_{\varepsilon,\lambda}$, there exist $\{\xi_{i}~|~i\in
N\}\subset L_{+}^{0}$ and $\{\mathcal {Q}_{i}~|~i\in N\}\subset
\mathcal {F}(\mathcal {P})$ such that
$$|f(x)|\leq\Sigma_{i=1}^{\infty}\tilde{I}_{A_{i}}\cdot\xi_{i}\cdot\|x\|_{\mathcal{Q}_{i}},$$ where $\mathcal{F}$ is generated by at most countably many disjoint atoms
$\{A_{i}~|~i\in N\}$.

We will prove the following claim: there is $n\in N$ such that
$\xi_{i}=0$ on $A_{i}, \forall i\geq n$. Otherwise, there exist $\{i_{k}~|~k\in N\}\subset
N$, $\{x_{k}\in E~|~k\in N\}$ and $\{\mathcal {Q}_{i_{k}}~|~k\in
N\}\subset \mathcal{F}(\mathcal{P})$ such that on $A_{i_{k}}$
$$\|\tilde{I}_{A_{i_{k}}}\cdot x_{k}\|_{\mathcal {Q}_{i_{k}}}>0$$ and $$\tilde{I}_{A_{i_{k}}}\cdot|f(x_{k})|>0.$$ Taking $y_{k}=\tilde{I}_{A_{i_{k}}}\cdot
Q(|f(x_{k})|)\cdot x_{k}$, since $A_{i_{k}}$ is a $P-$atom of
$\mathcal{F}$ for each $k\in N$ and
$P(\Sigma_{k=1}^{\infty}A_{i_{k}})<1$, then $\{P(A_{i_{k}})~|~k\in
N\}\rightarrow 0$ and $\{y_{k}~|~k\in N\}$ converges to 0
under $\mathcal {T}_{\varepsilon,\lambda}$. But
$\{f(y_{x_{k}})~|~k\in N\}$ does not converge to $0$ under $\mathcal
{T}_{c}$ by $|f(y_{k_{i}})|=1$ on $A_{i_{k}}$, which is a
contradiction to $f\in E^{\ast}_{min}$.

(2) Sufficiency: if there are finite $P-$atoms $\{A_{i}~|~1\leq i\leq
n\}$, $\{\xi_{i}~|~1\leq i\leq n\}$ in $L^{0}_+$ and
$\{\mathcal{Q}_{i}~|~1\leq i\leq n\}\subset\mathcal{F}(\mathcal{P})$
such that
$$|f(x)|\leq\Sigma_{i=1}^{n}\tilde{I}_{A_{i}}\cdot\xi_{i}\cdot\|x\|_{\mathcal{Q}_{i}},$$
$\forall x\in E$. Taking $A=\Sigma_{i=1}^{n}A_{i}$,
$E_{A}=\tilde{I}_{A}\cdot E$ and $f_{A}=$ the limitation of $f$ on
$E_{A}$, since $A\cap\mathcal{F}$ is a $\sigma-$algebra generated by
finite $P-$atoms, the $(\varepsilon,\lambda)-$topology is
equivalent to the locally $L^{0}-$convex topology for $E_{A}$. Hence
$f_{A}:(E_{A},\mathcal
{T}_{\varepsilon,\lambda})\rightarrow(\tilde{I}_{A}\cdot
L^{0}(\mathcal{F},K),\mathcal {T}_{c})$ is a continuous
homomorphism, where the base of both $E_{A}$ and $\tilde{I}_{A}\cdot
L^{0}(\mathcal{F},K)$ is taken to be $(A,A\cap {\cal F},P_A)$ with $P_A(A\cap F)=\frac{P(A\cap F)}{P(A)},\forall F\in{\cal F}$. Finally, since $f=0$ on $\tilde{I}_{A^{c}}\cdot E$, it is
clear that $f$ is a continuous
homomorphism from $(E,\mathcal{T}_{\varepsilon,\lambda})$ to $(L^{0}(\mathcal{F},K),\mathcal {T}_{c})$ and $f\in E^{\ast}_{min}$.

This completes the proof.
\end{proof}

\begin{corollary} Let $(E,\mathcal {P})$ be a
random locally convex module over $K$ with base $(\Omega,\mathcal
{F}, P)$ such that $\mathcal{F}$ has at least a $P-$atom. Then $f\in
E^{\ast}_{min}$ iff there are finite $P-$atoms $\{A_{i}~|~1\leq i\leq
n\}$, $\{\xi_{i}~|~1\leq i\leq n\}$ in
$L^{0}_+$ and $\{\mathcal{Q}_{i}~|~1\leq i\leq n\}$ in
$\mathcal{F}(\mathcal{P})$ such that
$$|f(x)|\leq\Sigma_{i=1}^{n}\tilde{I}_{A_{i}}\cdot\xi_{i}\cdot\|x\|_{\mathcal{Q}_{i}},$$
$\forall x\in E$.

\end{corollary}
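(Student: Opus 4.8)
The plan is to reduce the statement to the two cases already settled, by splitting the base $(\Omega,\mathcal{F},P)$ into its atomic and nonatomic parts. Since a probability space has essentially at most countably many disjoint atoms, I would fix a maximal such family $\{A_n\mid n\in N\}$, put $\Omega_a=\sum_n A_n$ and $\Omega_c=\Omega\setminus\Omega_a$. By maximality $(\Omega_c,\Omega_c\cap\mathcal{F},P_{\Omega_c})$ is nonatomic whenever $P(\Omega_c)>0$ (any atom inside $\Omega_c$ would contradict maximality), while $(\Omega_a,\Omega_a\cap\mathcal{F},P_{\Omega_a})$ is essentially purely $P$-atomic and has positive measure by the hypothesis that $\mathcal{F}$ has at least one $P$-atom. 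If $P(\Omega_c)=0$ the assertion is precisely Lemma \ref{lem:3.8}, so I may assume $P(\Omega_c)>0$.

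For the necessity direction, take $f\in E^{\ast}_{min}$. First I would restrict $f$ to $E_{\Omega_c}=\tilde{I}_{\Omega_c}\cdot E$, viewed as a random locally convex module over $(\Omega_c,\Omega_c\cap\mathcal{F},P_{\Omega_c})$. Because $f$ is a module homomorphism, $f(\tilde{I}_{\Omega_c}\cdot x)=\tilde{I}_{\Omega_c}\cdot f(x)$, so this restriction maps $E_{\Omega_c}$ into $\tilde{I}_{\Omega_c}\cdot L^{0}(\mathcal{F},K)=L^0(\Omega_c\cap\mathcal{F},K)$; as in the proof of Lemma \ref{lem:3.8}, both the $(\varepsilon,\lambda)$-topology and the locally $L^0$-convex topology restrict to the corresponding topologies on these localized modules, so the restriction lies in $(E_{\Omega_c})^{\ast}_{min}$. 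Since the base is nonatomic, Theorem \ref{thm:minel} forces $\tilde{I}_{\Omega_c}\cdot f(x)=0$ for all $x\in E$. Next, restricting $f$ to $E_{\Omega_a}=\tilde{I}_{\Omega_a}\cdot E$ gives, by the same reasoning, an element of $(E_{\Omega_a})^{\ast}_{min}$; applying Lemma \ref{lem:3.8} to the essentially purely $P$-atomic base $(\Omega_a,\Omega_a\cap\mathcal{F},P_{\Omega_a})$ yields finitely many $P$-atoms $\{A_i\mid 1\le i\le n\}$ (taken from among the $A_n$), $\{\xi_i\}\subset L^0_+$ and $\{\mathcal{Q}_i\}\subset\mathcal{F}(\mathcal{P})$ with $|f(x)|\le\sum_{i=1}^{n}\tilde{I}_{A_i}\cdot\xi_i\cdot\|x\|_{\mathcal{Q}_i}$ for all $x\in E_{\Omega_a}$. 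Finally, for arbitrary $x\in E$ write $x=\tilde{I}_{\Omega_c}\cdot x+\tilde{I}_{\Omega_a}\cdot x$; the first step gives $f(x)=f(\tilde{I}_{\Omega_a}\cdot x)$, and since $A_i\subset\Omega_a$ and each $\|\cdot\|_{\mathcal{Q}_i}$ is $L^0$-homogeneous, $\tilde{I}_{A_i}\cdot\|\tilde{I}_{\Omega_a}\cdot x\|_{\mathcal{Q}_i}=\tilde{I}_{A_i}\cdot\|x\|_{\mathcal{Q}_i}$, whence $|f(x)|\le\sum_{i=1}^{n}\tilde{I}_{A_i}\cdot\xi_i\cdot\|x\|_{\mathcal{Q}_i}$, as required.

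For the sufficiency direction I would repeat the sufficiency argument of Lemma \ref{lem:3.8}: put $A=\sum_{i=1}^{n}A_i$; the assumed bound forces $f=\theta$ on $\tilde{I}_{A^{c}}\cdot E$, while on $E_A=\tilde{I}_A\cdot E$ the inequality says $f_A\in (E_A)^{\ast}_I$, so $f_A$ is $\mathcal{T}_c$-continuous by Proposition \ref{prop:continuousII}. Since $A\cap\mathcal{F}$ is generated by finitely many atoms, $\mathcal{T}_c$ and $\mathcal{T}_{\varepsilon,\lambda}$ coincide on $E_A$ and on $\tilde{I}_A\cdot L^0(\mathcal{F},K)$, so $f_A:(E_A,\mathcal{T}_{\varepsilon,\lambda})\to(\tilde{I}_A\cdot L^0(\mathcal{F},K),\mathcal{T}_c)$ is continuous; gluing the two pieces shows $f:(E,\mathcal{T}_{\varepsilon,\lambda})\to(L^0(\mathcal{F},K),\mathcal{T}_c)$ is continuous, i.e. $f\in E^{\ast}_{min}$.

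I expect the main obstacle to be bookkeeping rather than depth: one must check carefully that restricting $f$ to the localized submodules $\tilde{I}_{\Omega_c}\cdot E$ and $\tilde{I}_{\Omega_a}\cdot E$ really produces elements of the respective $E^{\ast}_{min}$ spaces — that both topologies localize correctly — and that the finitely many atoms supplied by Lemma \ref{lem:3.8} on $\Omega_a$ transport back to a bound valid on all of $E$. Everything else is a direct concatenation of Theorem \ref{thm:minel} on the nonatomic part and Lemma \ref{lem:3.8} on the purely atomic part.
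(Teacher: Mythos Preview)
Your proposal is correct and follows exactly the route the paper intends: the paper's proof is the single line ``It follows immediately from Theorem \ref{thm:minel} and Lemma \ref{lem:3.8},'' and your argument is precisely the atomic/nonatomic decomposition that unpacks this sentence. The localization bookkeeping you flag is indeed the only real work, and it is handled by the same restriction technique already used inside the proof of Lemma \ref{lem:3.8}.
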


\begin{proof} It follows immediately from Theorem \ref{thm:minel} and
Lemma \ref{lem:3.8}.

This completes the proof.

\end{proof}

\end{document}